\newcommand{\IC}{\mathbb C}
\newcommand{\IT}{\mathbb T}
\newcommand{\myodot}{{\mbox{\tiny $\odot$}}}
\newcommand{\moc}{\myodot}
\newcommand{\IN}{\mathbb N}
\newcommand{\cT}{\dot\IT}
\newcommand{\id}{\mathrm{id}}
\newcommand{\IR}{\mathbb R}
\newcommand{\e}{\varepsilon}
\newtheorem{theorem}{Theorem}
\newtheorem{proposition}{Proposition}
\newtheorem{lemma}{Lemma}
\newtheorem{corollary}{Corollary}
\theoremstyle{definition}
\newtheorem{definition}{Definition}
\newtheorem{example}{Example}
\title[Pontryagin duality for abelian inverse monoids]{Pontryagin duality between compact and\\ discrete abelian inverse monoids}
\author{Taras Banakh and Olena Hryniv}
\address{Instytut Matematyki, Uniwersytet Humanistyczno-Przyrodniczy Jana Kochanowskiego, Kielce, Poland}
\address{Department of Mathematics, Ivan Franko National University of Lviv, Ukraine}
\email{tbanakh@yahoo.com, olena\_hryniv@ukr.net}
\keywords{Pontryagin Duality, abelian topological group, topological semilattice, topological monoid, dual topological inverse monoid}
\subjclass{22D35; 22A15; 22A26; 22B05}
\begin{document}
\begin{abstract} For a topological monoid $S$, the dual inverse monoid $S^\moc$ is the topological monoid of all 
identity preserving homomorphisms from $S$ to $\cT=\{z\in\IC:|z|\in\{0,1\}\}$, endowed with the compact-open topology.  A topological monoid $S$ is defined to be {\em reflexive} if the canonical homomorphism $\delta:S\to S^{\moc\moc}$ to the second dual inverse monoid is a topological isomorphism. We prove that a (compact or discrete) topological 
inverse monoid $S$ is reflexive (if and) only if $S$ is abelian and the idempotent semilattice $E=\{x\in S:xx=x\}$ of $S$ is zero-dimensional. For a discrete (resp. compact) topological monoid $S$ the dual inverse monoid $S^\moc$ is compact (resp. discrete). These results unify the Pontryagin-van Kampen Duality Theorem for abelian groups and the Hofmann-Mislove-Stralka Duality Theorem for zero-dimensional topological semilattices.
\end{abstract}
\maketitle

In this paper we unify the duality theories for abelian topological groups 
\cite{Morris} and topological semilattices \cite{HMS} and develop a duality theory for abelian topological inverse monoids.

Let us recall that a semigroup $S$ is 
\begin{itemize}
\item a {\em monoid} if it has a two-sided unit 1;
\item an {\em idempotent semigroup} if $xx=x$ for all $x\in S$;
\item {\em inverse} if for each $x\in S$ there is a unique element $x^{-1}\in S$ such that $xx^{-1}x=x$ and $x^{-1}xx^{-1}=x^{-1}$;
\item {\em Clifford}, if $S$ each element of $S$ lies in a subgroup of $S$;
\item {\em abelian} if $xy=yx$ for all $x,y\in S$;
\item a {\em semilattice} if $S$ is an abelian idempotent semigroup.
\end{itemize}

It is well-known that for an inverse semigroup $S$ the set of idempotents $E=\{e\in S:ee=e\}$ is a semilattice. This semilattice will be called {\em the idempotent semilattice} of $S$. For each idempotent $e\in S$ the set
$$H_e=\{x\in S:xx^{-1}=e=x^{-1}x\}$$is the {\em  maximal subgroup} of $S$ that contains the idempotent $e$. If $S$ is a Clifford inverse semigroup, then $S=\bigcup_{e\in E}H_e$ is the union of maximal groups and the map $\pi:S\to E$, $\pi:x\mapsto xx^{-1}=x^{-1}x$, is a homomorphism. It is known that an inverse semigroup $S$ is Clifford if and only if $xx^{-1}=x^{-1}x$ for all $x\in S$. An abelian semigroup is Clifford if and only if it is inverse, see \cite[II.2]{InvS}.  

A {\em topological inverse semigroup} is an inverse semigroup $S$ endowed with a topology making the multiplication $S^2\to S$, $(x,y)\mapsto xy$, and the inversion $S\to S$, $x\mapsto x^{-1}$, continuous. A {\em topological inverse monoid} is a topological inverse semigroup with a two-sided unit 1. All topological spaces considered in this paper are Hausdorff.

The class of topological inverse monoids includes all topological groups and all topological semilattices with a unit. An important example of an abelian topological inverse monoid is the compact subset
$$\dot\IT=\{z\in\IC:|z|\in\{0,1\}\}$$of the complex plane endowed with the operation of multiplication of complex numbers. 
The inverse monoid $\dot\IT$ can be written as the union $\dot\IT=\IT\cup\{0,1\}$ of the topological group $$\IT=\{z\in\IC:|z|=1\}$$ and the topological semilattice $\{0,1\}$ endowed with the operation of minimum.

A function $h:S\to M$ between monoids is called a {\em monoid homomorphism} if $h(1)=1$ and $h(xy)=h(x)h(y)$ for all $x,y\in S$.
 
For a topological monoid $S$ by $S^{\myodot}$ we denote the space of continuous monoid homomorphisms $S\to \dot\IT$, endowed with the compact-open topology.
Endowed with the operation of pointwise multiplication of homomorphisms, $S^{\myodot}$ is an abelian topological inverse monoid. The monoid $S^{\myodot}$ is called {\em the dual inverse monoid} of the topological monoid $S$. 
It contains a closed subgroup $$S^\circ=\{h\in S^{\myodot}:h(S)\subset\IT\}$$ called the {\em dual group} of $S$ and a closed idempotent submonoid
$$S^{\mathbf:}=\{h\in S^{\myodot}:h(S)\subset\{0,1\}\}$$  called the {\em dual idempotent monoid} of $S$. 
It is easy to see that $S^:$ coincides with the idempotent semilattice of the inverse semigroup $S^\moc$.

Observe that
$$S^\myodot=\begin{cases}
S^\circ&\mbox{if $S$ is a group};\\
S^:&\mbox{if $S$ is an idempotent monoid.}
\end{cases}
$$

The following property of the dual inverse monoid is immediate:

\begin{proposition} 
For any topological monoid $S$ the dual inverse monoid $S^\moc$ 
is an abelian topological inverse monoid whose idempotent semilattice is zero-dimensional and coincides with the dual idempotent monoid $S^:$.
\end{proposition}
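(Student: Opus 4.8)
The statement bundles three claims about $S^\moc$: it is an abelian topological inverse monoid, its idempotent semilattice coincides with $S^:$, and $S^:$ is zero-dimensional. My plan is to dispose of the first two quickly by transporting structure pointwise from $\cT$, and to devote the real effort to zero-dimensionality.

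\emph{Algebraic and topological structure.} I would start from the fact that $\cT$ is itself a commutative topological inverse monoid: it is abelian since $\cT\subset\IC$; its inversion $z\mapsto\bar z$ is continuous and restricts to $z^{-1}$ on $\IT$ and to $\id$ on $\{0,1\}$; and its idempotent set is $E(\cT)=\{0,1\}$. Since the defining identities of an abelian inverse monoid ($xy=yx$, associativity, $x1=x$, $xx^{-1}x=x$, $x^{-1}xx^{-1}=x^{-1}$; uniqueness of the inverse then being automatic for an abelian regular semigroup) are all equational, they pass verbatim to $S^\moc$ under pointwise operations: $h_1h_2$ and $h^{-1}=\iota\circ h$ (with $\iota$ the inversion of $\cT$) are again continuous monoid homomorphisms, the constant map $1$ is the unit, and every identity holds value by value. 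Continuity of the pointwise multiplication and inversion of $S^\moc$ in the compact-open topology follows from the standard facts that post-composition $h\mapsto\iota\circ h$ and $(h_1,h_2)\mapsto m\circ(h_1,h_2)$ are continuous for that topology (using $C(S,\cT\times\cT)\cong C(S,\cT)\times C(S,\cT)$), and $S^\moc$ is Hausdorff because $\cT$ is and point-evaluations separate homomorphisms. Finally, $h\in S^\moc$ is idempotent iff $h(x)=h(x)^2$ for all $x$, i.e.\ $h(x)\in E(\cT)=\{0,1\}$ for all $x\in S$, which is exactly the condition defining $S^:$.

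\emph{Zero-dimensionality of $S^:$.} This is the only point needing an argument, and the key observation is that $\{0\}$ and $\IT$ are \emph{clopen} in $\cT$ (because $0$ is isolated in $\cT$ and $\cT\setminus\{0\}=\IT$), with $\{0\}\cap\{0,1\}=\{0\}$ and $\IT\cap\{0,1\}=\{1\}$. Since $g(S)\subseteq\{0,1\}$ for every $g\in S^:$, a subbasic compact-open set $\{g\in S^{:} : g(K)\subseteq V\}$ (with $K\subseteq S$ compact and $V\subseteq\cT$ open) depends only on $V\cap\{0,1\}$, hence equals one of $\emptyset$, $S^:$, $W_0(K):=\{g\in S^{:} : g(K)\subseteq\{0\}\}$, or $W_1(K):=\{g\in S^{:} : g(K)\subseteq\{1\}\}$. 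The first two are clopen. For $W_i(K)$: taking $K=\{x\}$, the sets $W_0(\{x\})$ and $W_1(\{x\})$ partition $S^:$ and each equals a subbasic open set $W(\{x\},V)\cap S^:$ with $V\in\{\{0\},\IT\}$, hence each is clopen; therefore $W_i(K)=\bigcap_{x\in K}W_i(\{x\})$ is closed, while being at the same time the single subbasic open set $W(K,V)\cap S^:$, so $W_i(K)$ is clopen. Thus every subbasic open set of $S^:$ is clopen, their finite intersections form a base, and $S^:$ is zero-dimensional.

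\emph{Expected obstacle.} The one delicate step is the reduction to $V\cap\{0,1\}$: one must resist intersecting with $\{0,1\}$ and treating the trace $\{1\}$ as an open subset of $\cT$ (it is not), and instead choose from the outset the genuinely clopen representatives $\{0\}$ and $\IT$ of the two relevant traces — which is legitimate precisely because $0$ is an isolated point of $\cT$. Once this is arranged, clopenness of the basic sets, and hence zero-dimensionality of $S^:$, drops out, and the remainder is routine bookkeeping with the compact-open topology.
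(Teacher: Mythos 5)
Your proposal is correct and follows essentially the same route as the paper, which declares the algebraic/topological structure ``immediate'' and justifies zero-dimensionality in one line by observing that $S^:$ embeds into the zero-dimensional function space $C_k(S,\{0,1\})$. Your direct verification that the subbasic compact-open sets of $S^:$ are clopen (via the clopen sets $\{0\}$ and $\IT$ of $\cT$) is just that one-line argument written out in full, so there is nothing to change.
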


The zero-dimensionality of the dual idempotent monoid $S^:$ follows from the fact that it embeds into the  function space $C_k(S,\{0,1\})$ endowed with the compact-open topology. The latter function space is zero-dimensional.
We recall that a topological space $X$ is {\em zero-dimensional} if closed-and-open subsets form a base of the topology
of $X$.

Any continuous monoid homomorphism $h:S\to M$ between topological monoids induces a continuous monoid homomorphism $$h^\moc:M^\moc\to S^\moc,\;\;h^\moc:f\mapsto f\circ h$$ called the {\em dual homomorphism} of $h$. Now we see that $(\cdot)^\moc$ is a contravariant functor from the category of topological monoids and their continuous monoid homomorphisms to the category of abelian topological inverse monoids and their monoid homomorphism.

%The functor $(\cdot)^\moc$ nicely commutes with products. Namely, it is easy to check that for any topological monoids $X,Y$ the function 
%$$\Pi:X^\moc\times Y^\moc\to (X\times Y)^\moc,\;\;\Pi:(\varphi,\psi)\mapsto\varphi\times\psi,\;\mbox{ where }\;(\varphi\times\psi)(x,y)=\varphi(x)\cdot\psi(y)$$
%is a topological isomorphism. 

For a topological monoid $S$ the map $$\delta_S:S\to S^{\myodot\myodot},\;\delta_S:x\mapsto \delta_S(x)\mbox{ where  $\delta_S(x):h\mapsto h(x)$ for $h\in S^\moc$}$$ is a monoid homomorphism called the {\em canonical homomorphism} of $S$ into its second dual inverse monoid $S^{\myodot\myodot}$. 

By analogy we can define the canonical homomorphisms 
$$\delta_S:S\to S^{\circ\circ}\mbox{ \ and \ }\delta_S:S\to S^{::}$$of $S$ into its second dual group and second dual semilattice. If $S$ is clear from the context, we shall omit the subscript and write $\delta$ instead of $\delta_S$.
Now we define a principal notion of this paper.

\begin{definition} A topological monoid $S$ is called {\em reflexive} if the canonical homomorphism $\delta_S:S\to S^{\myodot\myodot}$ is a topological isomorphism. 
\end{definition}
If $S$ is a group, then $S^{\myodot\myodot}=S^{\circ\circ}$ and thus $S$ is reflexive if and only if the canonical homomorphism $\delta:S\to S^{\circ\circ}$ to the second dual group is a topological isomorphism, which means that $S$ is reflexive in the usual sense \cite{Refl}. A similar fact holds if $S$ is an idempotent monoid: since $S^{\myodot\myodot}=S^{::}$, $S$ is reflexive if and only if the canonical homomorphism $\delta:S\to S^{::}$ is a topological isomorphism.

Now we are able to formulate the duality theorems for abelian topological groups and abelian idempotent topological  monoids. The following  Duality Theorem is classical and can be found in  \cite{Morris}.

\begin{theorem}[Pontryagin - van Kampen]\label{group}
\begin{enumerate}
\item A topological group $G$ is reflexive (if and) only if $G$ is abelian (and locally compact).
\item If a topological group $G$ is compact (resp. discrete), then its dual group $G^\circ=G^\myodot$ is discrete (resp. compact).
\end{enumerate}
\end{theorem}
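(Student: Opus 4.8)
Both assertions are classical and can be assembled from \cite{Morris}; here I outline how I would organise the argument, emphasising the pieces needed in what follows. For part~(2) the discrete case is easy: when $G$ is discrete every compact subset of $G$ is finite, so the compact--open topology on $G^\circ$ coincides with the topology of pointwise convergence, whence $G^\circ$ is a closed subgroup (cut out by the relations $f(xy)=f(x)f(y)$) of the compact group $\IT^G$ and is therefore compact by the Tychonoff theorem. For the compact case the decisive observation is that the only subgroup of $\IT$ contained in the open half-circle $\{z\in\IT:\mathrm{Re}\,z>0\}$ is the trivial one; hence, when $G$ is compact, the compact--open neighbourhood $U=\{h\in G^\circ:\mathrm{Re}\,h(x)>0\text{ for all }x\in G\}$ of the trivial character consists only of characters with image $\{1\}$, so $U=\{1\}$ is open and the topological group $G^\circ$ is discrete.

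For part~(1) the implication ``reflexive $\Rightarrow$ abelian'' is immediate from Proposition~1, since the second dual $G^{\moc\moc}=G^{\circ\circ}$ is always an abelian topological inverse monoid; local compactness enters only through the nontrivial converse. That converse --- that every locally compact abelian group is reflexive --- is the substantive content, and I would prove it by the standard four-step reduction. \emph{Step~1:} $\delta_G$ is always a continuous monomorphism that is a homeomorphism onto its image, injectivity being the separation of the points of $G$ by characters, which follows from the Peter--Weyl theorem for the compact quotients of $G$ together with the divisibility of $\IT$. \emph{Step~2:} verify reflexivity by direct computation for the building blocks, using part~(2) to identify $\IR^\circ\cong\IR$, $\IZ^\circ\cong\IT$, $\IT^\circ\cong\IZ$ and $(\IZ/n\IZ)^\circ\cong\IZ/n\IZ$. \emph{Step~3:} pass to finite products $\IR^n\times\IT^k\times\IZ^m\times F$ with $F$ finite, using the compatibility of $(\,\cdot\,)^\circ$ and $\delta$ with finite products.

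\emph{Step~4}, which I expect to be the main obstacle: for an arbitrary locally compact abelian $G$, use the structure theorem to find an open subgroup $H\cong\IR^n\times K$ with $K$ compact, present $G$ as an extension of the discrete group $G/H$ by $H$ and $K$ as an inverse limit of elementary groups, and run a five-lemma argument built on the exactness of Pontryagin duality on short exact sequences together with the naturality of $\delta$. The delicate point throughout Step~4 is that $(\,\cdot\,)^\circ$ really does carry short exact sequences of locally compact abelian groups --- with the subspace and quotient topologies, and with the quotient maps open --- to short exact sequences; this is where the genuine topological difficulty of the Pontryagin--van Kampen theorem resides. Only the cases $G$ compact and $G$ discrete, already covered by Steps~1--2 and part~(2), will be needed below.
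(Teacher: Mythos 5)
The paper gives no proof of this theorem at all: it is stated as classical and dispatched with the citation to \cite{Morris}, so there is no argument of the paper's to compare against. Your outline is correct and is exactly the standard treatment: the two self-contained arguments you give for part~(2) (closedness in the Tychonoff product $\IT^G$ for discrete $G$; the fact that the only subgroup of $\IT$ inside the open half-circle is trivial for compact $G$) are sound, and the first of them reappears almost verbatim in the paper's own Lemma~\ref{l:dc} for the inverse-monoid analogue. One small caution on part~(1): read with all parentheses inserted, the statement would claim that a reflexive group is necessarily locally compact, which is false (e.g.\ for suitable infinite products); your reading --- ``reflexive $\Rightarrow$ abelian'' unconditionally, plus ``locally compact abelian $\Rightarrow$ reflexive'' as the substantive converse --- is the intended and correct one.
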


A similar Duality Theorem for zero-dimensional topological idempotent monoids was proved by Hofmann, Mislove, and Stralka \cite{HMS}.

\begin{theorem}[Hofmann - Mislove - Stralka]\label{semilat} 
\begin{enumerate}
\item A topological idempotent monoid $S$ is reflexive (if and) only if $S$ is abelian and zero-dimensional (and $S$ is either discrete or compact).
\item If a topological idempotent monoid $S$ is compact (resp. discrete), then its dual idempotent monoid $S^{:}=S^\myodot$ is discrete (resp. compact).
\end{enumerate}
\end{theorem}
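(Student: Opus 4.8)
The plan is to run the classical template for Pontryagin-type dualities: settle the purely topological half directly, reduce the structural half to the finite case, and pass to limits. Throughout I use that $S^\moc=S^{:}$ for an idempotent monoid $S$ (noted above), so that the canonical map is $\delta_S\colon S\to S^{::}$. Statement (2) is immediate. If $S$ is discrete, the compact-open topology on $S^{:}=\Hom(S,\{0,1\})$ is the topology of pointwise convergence, so $S^{:}$ is a subspace of the compact space $\{0,1\}^S$; for each fixed $x,y\in S$ the conditions $h(1)=1$ and $h(xy)=\min\{h(x),h(y)\}$ constrain only finitely many coordinates and hence cut out closed sets, so $S^{:}$ is closed in $\{0,1\}^S$, thus compact, and visibly a topological subsemilattice of $(\{0,1\}^S,\min)$. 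If $S$ is compact, then for $h_0\in S^{:}$ the preimages $h_0^{-1}(1)$ and $h_0^{-1}(0)$ are closed, hence compact, in $S$, and the compact-open neighborhood $\{h\in S^{:}:h(h_0^{-1}(1))\subseteq\{1\}\}\cap\{h\in S^{:}:h(h_0^{-1}(0))\subseteq\{0\}\}$ of $h_0$ coincides with $\{h_0\}$, since these two preimages cover $S$; so $S^{:}$ is discrete.

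In statement (1) the ``only if'' direction is nearly formal: if $\delta_S$ is a topological isomorphism, then $S$ is topologically isomorphic to $S^{\moc\moc}$, which by Proposition 1 is an abelian topological inverse monoid whose idempotent semilattice is zero-dimensional; but $S^{::}$ is itself an idempotent monoid, hence equal to its own idempotent semilattice, so $S$ is abelian and zero-dimensional. The one point of this direction needing real work is that reflexivity additionally forces $S$ to be discrete or compact; I would isolate this as a separate lemma analysing which topologies on $S^{:}\subseteq C_k(S,\{0,1\})$ can make $\delta_S$ simultaneously surjective and open.

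The substance is the ``if'' direction, which I would prove assuming $S$ is abelian, zero-dimensional and compact; the discrete case is the mirror image with all limits and colimits interchanged (a discrete semilattice is the directed union of its finitely generated subsemilattices, and a finitely generated commutative idempotent semigroup is finite). The essential structural input --- and this is the core of \cite{HMS} --- is that $S$ is an inverse limit $S=\varprojlim_i S_i$ of finite semilattices with identity, along surjective bonding and projection maps. Its seed is that the identity of $S$ has a neighborhood base of clopen subsemilattices: given clopen $U\ni 1$, pick clopen $V$ with $1\in V$ and $VV\subseteq U$, and set $W=\{x\in V:xV\subseteq V\}$; using that $V$ is clopen and compact one checks that $W$ is clopen, that $1\in W\subseteq U$, and that $WW\subseteq W$. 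Assembling enough such subsemilattices and their translates into clopen congruences with finite quotients whose intersection is the diagonal yields the inverse-limit presentation. Upgrading ``base at the identity'' to ``enough congruences to separate all points'' is the delicate step, and I expect it to be the main obstacle.

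Granting $S=\varprojlim_i S_i$, the rest is routine. Any continuous $h\colon S\to\{0,1\}$ has clopen fibres, which in an inverse limit of finite discrete spaces are cylinders, so $h$ factors as $\bar h\circ p_i$ through some projection $p_i\colon S\to S_i$, and surjectivity of $p_i$ makes $\bar h$ a homomorphism; hence $S^{:}=\varinjlim_i S_i^{:}$. Dualizing this colimit gives $S^{::}=(S^{:})^{:}=\varprojlim_i S_i^{::}$ compatibly with the canonical maps, so $\delta_S=\varprojlim_i\delta_{S_i}$. For a finite semilattice $M$ with identity one then checks $\delta_M$ by hand: $M^{:}$ is the set of filters of $M$, every filter of a finite semilattice is principal, so $m\mapsto{\uparrow}m$ identifies $M^{:}$ with $(M,\vee)$ with identity $0_M$; iterating once more identifies $M^{::}$ with $(M,\wedge)=M$, and tracing $\delta_M$ shows it is precisely this identification, hence an isomorphism. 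Since each $\delta_{S_i}$ is an isomorphism and the two inverse systems share an index set, $\delta_S$ is a continuous bijection between compact Hausdorff spaces, therefore a topological isomorphism; this completes the compact case, and the discrete case follows by the dual argument.
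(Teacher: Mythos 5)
First, a point of reference: the paper does not prove this theorem at all --- it is quoted as a classical result and attributed to \cite{HMS}, so there is no in-paper argument to compare yours against. Judged on its own terms, your outline is the standard one (reduce to finite semilattices via a pro-finite/ind-finite presentation and check the finite case by hand), and large parts of it are complete and correct: the proof of part (2) in both directions is fine (for the compact case note only that $\{1\}$ is not open in $\cT$, so you should phrase the subbasic neighborhood as $[h_0^{-1}(1),U]$ for a small circle-neighborhood $U$ of $1$, which cuts out the same set inside $S^{:}$); the computation $M^{:}\cong(M,\vee)$, $M^{::}\cong M$ for finite $M$ is correct; and the passage $S=\varprojlim_iS_i\Rightarrow S^{:}=\varinjlim_iS_i^{:}\Rightarrow\delta_S=\varprojlim_i\delta_{S_i}$ is routine once the presentation is in hand.

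The genuine gap is exactly the one you flag and then set aside: the claim that a compact zero-dimensional topological semilattice with identity is an inverse limit of finite discrete semilattices along surjective maps. Your clopen-subsemilattice construction $W=\{x\in V:xV\subseteq V\}$ only produces a neighborhood base at $1$; to get the inverse-limit presentation you need a separating family of clopen \emph{congruences} of finite index, equivalently enough continuous characters $S\to\{0,1\}$ (clopen filters) to separate points. This is not a technical upgrade of the local statement --- it is the entire content of the Hofmann--Mislove--Stralka theorem (it is essentially Numakura's theorem on compact totally disconnected semigroups, specialized and sharpened for semilattices in Chapter II of \cite{HMS}), and injectivity of $\delta_S$ is literally equivalent to it. So as written the proposal assumes the theorem's hard kernel. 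A secondary, smaller issue: you misread the parenthetical convention in the statement. As in the Pontryagin--van Kampen formulation ``reflexive (if and) only if abelian (and locally compact)'', the parenthesized clauses are hypotheses of the \emph{if} direction only; the theorem does not assert that reflexivity forces $S$ to be discrete or compact, so the separate lemma you propose about which topologies make $\delta_S$ surjective and open is not needed (and the assertion you would be trying to prove may well be false).
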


Now our strategy is to unify these two Duality Theorems and prove a Duality Theorem for abelian topological inverse monoids. First we generalize the second parts of Duality Theorems~\ref{group} and \ref{semilat}:

\begin{theorem}\label{t3} If a topological monoid $S$ is discrete (resp. compact), then its dual inverse monoid $S^\myodot$ is compact (resp. discrete).
\end{theorem}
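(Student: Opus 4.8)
\medskip

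The plan is to prove the two implications separately, since they rest on rather different ideas.

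\emph{The discrete case.} When $S$ is discrete, every map $S\to\dot\IT$ is continuous and the compact subsets of $S$ are exactly the finite sets, so the compact-open topology on $S^\moc$ is just the topology of pointwise convergence; equivalently, $S^\moc$ carries the subspace topology inherited from the product $\dot\IT^S$. Since $\dot\IT$ is compact Hausdorff, so is $\dot\IT^S$ by Tychonoff's theorem, and it suffices to check that $S^\moc$ is closed in $\dot\IT^S$. This is immediate: the condition $h(1)=1$ defines the closed set $\pi_1^{-1}(\{1\})$, and for each pair $(x,y)\in S\times S$ the condition $h(xy)=h(x)h(y)$ defines the closed set $\Phi_{x,y}^{-1}(\Delta)$, where $\Phi_{x,y}:\dot\IT^S\to\dot\IT\times\dot\IT$, $h\mapsto\bigl(h(xy),\,h(x)h(y)\bigr)$, is continuous (being assembled from coordinate projections and the multiplication of $\dot\IT$) and $\Delta\subseteq\dot\IT\times\dot\IT$ is the closed diagonal. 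The intersection of these closed sets is precisely $S^\moc$, so $S^\moc$ is closed in a compact space, hence compact.

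\emph{The compact case.} Here the plan is to show that every point $h\in S^\moc$ is isolated. As $S$ is compact and $\dot\IT$ is metrizable, the compact-open topology on $S^\moc$ coincides with the topology of uniform convergence, so $W=\{g\in S^\moc:\sup_{x\in S}|g(x)-h(x)|<1\}$ is an open neighbourhood of $h$, and I claim $W=\{h\}$. Two features of $\dot\IT$ drive the argument: first, $\dot\IT=\IT\sqcup\{0\}$ is disconnected with $\{0\}$ clopen; second, the arc $V=\{z\in\IT:|z-1|<1\}=\{e^{i\theta}:|\theta|<\pi/3\}$ contains no nontrivial submonoid of $\IT$ (a nontrivial $e^{i\theta}\in V$ has $0<|\theta|<\pi/3$, and then some positive power $e^{ik\theta}$ escapes $V$). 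Now take $g\in W$. Comparing moduli, which lie in $\{0,1\}$, shows at once that $g(x)=0\iff h(x)=0$; hence $g$ and $h$ agree on $h^{-1}(0)$ and both map the submonoid $A=h^{-1}(\IT)$ of $S$ into $\IT$. On $A$ the assignment $f(x)=g(x)\,\overline{h(x)}$ defines a monoid homomorphism $f:A\to\IT$ (using commutativity of $\IT$), with $|f(x)-1|=|g(x)-h(x)|<1$, so $f(A)$ is a submonoid of $\IT$ contained in $V$, whence $f\equiv1$, i.e.\ $g|_A=h|_A$. Combined with $g|_{h^{-1}(0)}=h|_{h^{-1}(0)}=0$, this gives $g=h$, so $W=\{h\}$ and $S^\moc$ is discrete.

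The main obstacle is the compact case, specifically the passage from the pair $(g,h)$ to a single homomorphism into a tiny arc of $\IT$. One must observe that the splitting $\dot\IT=\IT\sqcup\{0\}$ forces $g$ and $h$ to vanish on the same set, and that commutativity of $\IT$ then makes $g\,\overline h$ a homomorphism, at which point the classical ``no small nontrivial subgroup (indeed submonoid) of $\IT$'' principle finishes the argument. A minor point to keep in mind throughout is that $A=h^{-1}(\IT)$ is only a submonoid of $S$ and need not be a group, so the whole argument should be phrased with submonoids rather than subgroups. Note that neither implication uses that $S$ is abelian or inverse; only the commutativity of $\dot\IT$ is needed.
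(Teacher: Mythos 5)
Your proof is correct. The discrete half is essentially identical to the paper's: both identify the compact-open topology with the pointwise topology and exhibit $S^\moc$ as a closed subspace of the compact product $\cT^S$. The compact half, however, takes a genuinely different and more elementary route. The paper first replaces $S$ by its image $\tilde S=\rho(S)$ in $\cT^{S^\moc}$, argues that $\tilde S$ is a compact \emph{abelian inverse} submonoid with $\tilde S^\moc\cong S^\moc$, and then invokes a structural lemma whose proof rests on the Pontryagin--van Kampen theorem (discreteness of the dual of a compact group) together with the existence of a smallest element in a compact clopen subsemilattice of $E$. You instead work directly in $S^\moc$: using that the compact-open topology is the uniform topology for compact $S$, you show the unit ball around $h$ collapses to $\{h\}$ by (i) the clopen splitting $\cT=\IT\sqcup\{0\}$, which forces $g$ and $h$ to have the same zero set, and (ii) the observation that $g\,\overline h$ is a monoid homomorphism of $A=h^{-1}(\IT)$ into the arc $\{z\in\IT:|z-1|<1\}$, which contains no nontrivial submonoid. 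This inlines the classical ``no small subgroups'' argument (which is what underlies the compact-group case of Pontryagin duality anyway), correctly adapts it from subgroups to submonoids, and bypasses both the reduction to $\tilde S$ and all of the semilattice and duality machinery; it also makes transparent that the theorem holds for arbitrary topological monoids with no commutativity or inverse hypotheses on $S$. The only point worth stating explicitly if you write this up is the standard fact that for compact $S$ and metrizable target the compact-open and uniform topologies on $C(S,\cT)$ coincide, so that your set $W$ really is compact-open open; with that reference in place the argument is complete.
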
    

Next, we characterize the reflexivity in the classes of compact or discrete topological monoids.

\begin{theorem}\label{t4} A compact or discrete topological monoid $S$ is reflexive if and only if $S$ is an abelian topological inverse monoid with zero-dimensional idempotent semilattice $E$.
\end{theorem}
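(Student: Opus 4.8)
\medskip
\noindent\textbf{Proof strategy.} The forward implication is immediate from Proposition~1. If $S$ is reflexive, then $\delta_S$ is a topological isomorphism of $S$ onto $S^{\moc\moc}=(S^\moc)^\moc$, and Proposition~1 applied to the topological monoid $S^\moc$ says that $(S^\moc)^\moc$ is an abelian topological inverse monoid with zero-dimensional idempotent semilattice; transporting the commutativity, the inversion and the idempotent semilattice back along $\delta_S$ yields the three asserted properties of $S$. For the reverse implication, assume $S$ is compact or discrete, abelian and inverse. Then $S$ is Clifford (abelian inverse semigroups are Clifford), so $S=\bigcup_{e\in E}H_e$ is the union of its maximal subgroups and $\pi\colon S\to E$, $\pi(x)=xx^{-1}$, is a continuous retraction homomorphism onto the idempotent semilattice $E$, which we assume to be zero-dimensional. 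The first step is to reduce reflexivity to bijectivity of $\delta_S$: by Theorem~\ref{t3} the dual $S^\moc$ is discrete (resp.\ compact), hence $S^{\moc\moc}$ is compact (resp.\ discrete), so a continuous bijective homomorphism $\delta_S\colon S\to S^{\moc\moc}$ is automatically a topological isomorphism --- being a continuous bijection from a compact space onto a Hausdorff space, resp.\ a bijection between discrete spaces --- while $\delta_S$ is continuous in either case.

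The second step is to record the structure of $S^\moc$. Every $h\in S^\moc$ restricts to a homomorphism $\chi:=h|_E\colon E\to\{0,1\}$, vanishes off the clopen submonoid $S_\chi:=\pi^{-1}(\chi^{-1}(1))$, and maps $S_\chi$ into the circle $\IT$; conversely, each $h'\in(S_\chi)^\circ$ extends by $0$ to such an $h$. Hence $S^\moc=\bigsqcup_{\chi\in E^{:}}(S_\chi)^\circ$ is itself an abelian Clifford monoid over the semilattice $E^{:}=E^\moc$, with $(S_\chi)^\circ$ the maximal subgroup lying over $\chi$. Injectivity of $\delta_S$ means that $S^\moc$ separates the points of $S$. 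Given $x\ne y$: if $\pi(x)\ne\pi(y)$, then, since $E$ is compact or discrete, abelian and zero-dimensional, it is reflexive by the Hofmann--Mislove--Stralka Theorem~\ref{semilat}, so a character of $E$ separates $\pi(x)$ from $\pi(y)$, and its composition with $\pi$ separates $x$ from $y$. If $\pi(x)=\pi(y)=e$, then $xy^{-1}\ne e$ in the compact (resp.\ discrete) abelian group $H_e$, which is reflexive by the Pontryagin--van Kampen Theorem~\ref{group}, so some character $g$ of $H_e$ is nontrivial on $xy^{-1}$, and it remains to extend $g$ to a member of $S^\moc$. When the up-set $\mathord\uparrow e=\{f\in E:ef=e\}$ is clopen --- which is automatic when $S$ is discrete --- the map $h$ given by $h(z)=g(ze)$ for $\pi(z)\in\mathord\uparrow e$ and $h(z)=0$ otherwise is the desired extension. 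In general one must first replace $g$ by a character of $H_e$ that is still nontrivial on $xy^{-1}$ but trivial on the closed subgroup generated by the kernels $\{a\in H_e:af=f\}$ of the linking homomorphisms $a\mapsto af$, as $f$ ranges over a suitable clopen filter $F\subseteq E$ containing $e$, and then extend it by $0$ off $\pi^{-1}(F)$. This works because $\bigcap_{f<e}\{a\in H_e:af=f\}=\{e\}$ (if $af=f$ and $f\to e$, then $a=ae=e$ by continuity) and because the zero-dimensionality of $E$ supplies clopen filters $F$ for which the corresponding subgroup does not contain $xy^{-1}$.

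Surjectivity of $\delta_S$ I would obtain by the mirror argument. Given $\Phi\in S^{\moc\moc}$, the composite $\Phi\circ\pi^\moc\colon E^{:}\to\dot\IT$ lands in the idempotent part $E^{::}$ and, by the reflexivity of $E$, equals $\delta_E(e_0)$ for a unique $e_0\in E$; this locates $\Phi$ over a prescribed idempotent of the Clifford monoid $S^{\moc\moc}$, identifies it with a character of a compact (resp.\ discrete) abelian group assembled from the fibers $H_e$ with $e\ge e_0$ and their linking homomorphisms, and the reflexivity of those groups (Theorem~\ref{group}) then produces an $x\in S$ with $\delta_S(x)=\Phi$. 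Alternatively, once the discrete case is settled the compact case can be bootstrapped: for compact $S$ the dual $S^\moc$ is discrete, hence reflexive by the discrete case, so the functorial identity $\delta_S^{\moc}\circ\delta_{S^\moc}=\mathrm{id}_{S^\moc}$ makes $\delta_S^{\moc}$ (the inverse of the isomorphism $\delta_{S^\moc}$) a topological isomorphism, after which a density argument --- using that $\delta_S(S)$ is a compact, hence closed, submonoid of $S^{\moc\moc}$ --- shows $\delta_S$ to be onto.

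The step I expect to be the main obstacle is the character-extension in the injectivity argument, together with its counterpart in the surjectivity argument: reconciling the Pontryagin duality on the group fibers $H_e$ with the Hofmann--Mislove--Stralka duality on the base semilattice $E$ precisely when the up-set $\mathord\uparrow e$ fails to be open. This is exactly where the zero-dimensionality of $E$ enters essentially --- it manufactures the clopen filters along which fiber characters can be spread over all of $S$ (and, dually, along which characters of $S^\moc$ can be located) --- and, together with Proposition~1, it is what makes the hypothesis on $E$ necessary, not merely sufficient.
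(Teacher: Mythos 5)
Your overall architecture is the paper's: the ``only if'' direction from Proposition~1, the reduction of reflexivity to bijectivity of $\delta_S$ via Theorem~\ref{t3} (plus continuity of $\delta_S$, which in the compact case needs Lemma~\ref{cont}), injectivity by separating points with Hofmann--Mislove--Stralka characters of $E$ composed with $\pi$ when $\pi(x)\ne\pi(y)$ and with Pontryagin characters of a maximal subgroup extended over a clopen up-set when $\pi(x)=\pi(y)$, and surjectivity handled separately in the discrete and compact cases. Your injectivity argument is sound: your ``clopen filter $F$'' in a compact zero-dimensional semilattice is necessarily ${\uparrow}e_0$ for a locally minimal $e_0$ (this is the paper's Lemma~\ref{locmin}), your extension-by-zero over $\pi^{-1}(F)$ is the paper's $\Lambda_{e_0}$, and your observation that $\bigcap_f\{a\in H_e:af=f\}=\{e\}$ is a valid (if more roundabout) substitute for the paper's direct choice of $e_0$ with $e_0x\ne e_0y$.

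The genuine gap is in surjectivity, on both branches. In the discrete case, after locating $\Phi$ over $e_0$ and obtaining $z\in H_{e_0}$ with $\Phi\circ\Lambda_{e_0}=\delta_{H_{e_0}}(z)$ (you must also first check that $\Phi\circ\Lambda_{e_0}$ is $\IT$-valued, i.e.\ $\Phi(\Lambda_{e_0}\mathbf 1)=1$), the real work is verifying $\Phi(h)=h(z)$ for \emph{every} $h\in S^\moc$ --- in particular for $h$ with $h(e_0)=1$ whose support is strictly larger than $\pi^{-1}({\uparrow}e_0)$, so that $h$ is neither of the form $\Lambda_{e_0}g$ nor of the form $g\circ\pi$. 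The paper does this via the identity $h\cdot(g\circ\pi)=g\cdot(h\circ\pi)$ with $g=\Lambda_{e_0}(h|H_{e_0})$ and multiplicativity of $\Phi$; your ``group assembled from the fibers and their linking homomorphisms'' never confronts this step, and without it the argument does not close. In the compact case, your use of the identity $\delta_S^\moc\circ\delta_{S^\moc}=\id_{S^\moc}$ to make $\delta_S^\moc$ an isomorphism is exactly the paper's move, but ``a density argument using that $\delta_S(S)$ is closed'' is not what finishes it: one must show that a point $x\notin\delta_S(S)$ forces $\delta_S^\moc$ to be \emph{non-injective}, which requires producing two distinct elements of $S^{\moc\moc\moc}$ agreeing on $\delta_S(S)$. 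That construction again runs through Lemma~\ref{locmin} applied to $E(S^{\moc\moc})\cong E^{::}\cong E$, a character of a maximal subgroup $\tilde H_e$ of $S^{\moc\moc}$ annihilating the closed subgroup $\tilde H_e\cap\delta_S(S)$ but not $ex$, and its extension $\Lambda_e h$ compared with $\Lambda_e h\circ\pi$. So the obstacle you flag (character extension across non-open up-sets) is real but is actually the part you handle; the part you elide is the verification that the candidate preimage works, respectively the separation of $\delta_S(S)$ inside $S^{\moc\moc}$ by characters.
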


Theorems~\ref{t3} and \ref{t4} imply the following duality between compact and discrete reflexive topological inverse monoids.

\begin{corollary} A reflexive topological monoid $S$ is compact (reps. discrete) if and only if its dual inverse monoid $S^\moc$ is discrete (resp. compact).
\end{corollary}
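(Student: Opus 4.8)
The plan is to derive this corollary purely formally from Theorem~\ref{t3} together with the definition of reflexivity, without invoking the characterization in Theorem~\ref{t4}. The key observation is that if $S$ is reflexive, then the canonical homomorphism $\delta_S\colon S\to S^{\moc\moc}$ is a topological isomorphism, so $S$ is homeomorphic to $(S^\moc)^\moc=S^{\moc\moc}$; in particular $S$ is compact (resp.\ discrete) if and only if $(S^\moc)^\moc$ is compact (resp.\ discrete).

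First I would dispose of the ``only if'' directions, which in fact require no reflexivity at all: if $S$ is compact (resp.\ discrete), then Theorem~\ref{t3} applied to $S$ immediately gives that $S^\moc$ is discrete (resp.\ compact).

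For the ``if'' directions, assume $S$ is reflexive. If $S^\moc$ is discrete, then Theorem~\ref{t3} applied to the discrete monoid $S^\moc$ yields that $(S^\moc)^\moc=S^{\moc\moc}$ is compact; since $\delta_S$ is a topological isomorphism, $S$ is compact as well. Symmetrically, if $S^\moc$ is compact, then Theorem~\ref{t3} applied to the compact monoid $S^\moc$ shows that $S^{\moc\moc}$ is discrete, whence $S$ is discrete.

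I do not expect any genuine obstacle here: the statement is a formal consequence of the functorial duality already established, obtained by applying Theorem~\ref{t3} twice (once to $S$, once to $S^\moc$) and transporting the conclusion back along $\delta_S$. The only point requiring a little care is the bookkeeping of the ``resp.'' clauses, together with the elementary remark that a topological isomorphism preserves both compactness and discreteness, so that the two dual halves of the corollary correspond exactly to the two halves of Theorem~\ref{t3}.
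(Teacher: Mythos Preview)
Your argument is correct and is essentially the natural proof the paper has in mind: the paper does not spell out a proof but simply attributes the corollary to Theorems~\ref{t3} and~\ref{t4}. Your write-up in fact shows that Theorem~\ref{t4} is not needed at all---Theorem~\ref{t3} together with the topological isomorphism $\delta_S\colon S\to S^{\moc\moc}$ already gives both implications---so your derivation is slightly more economical than the paper's attribution suggests.
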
 

It turns out that under some restrictions on the maximal semilattice and maximal groups, the reflexivity of an abelian topological inverse monoid is equivalence to its compactness.

\begin{theorem}\label{refcom} Let $S$ be an abelian topological inverse monoid with compact zero-dimensional idempotent semilattice $E$ and compact maximal groups $H_e$, $e\in E$. The topological monoid $S$ is reflexive if and only if $S$ is compact.
\end{theorem}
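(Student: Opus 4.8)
Here is the plan I would follow.

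\medskip

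The ``if'' direction needs nothing new: if $S$ is compact then, being an abelian topological inverse monoid with zero-dimensional idempotent semilattice, it is reflexive by Theorem~\ref{t4}. So the whole content is the ``only if'' direction, and the plan there is to deduce compactness of $S$ from Theorem~\ref{t3} by first establishing that \emph{the dual inverse monoid $S^\moc$ is discrete}. Granting this, reflexivity of $S$ yields a topological isomorphism $S\cong S^{\moc\moc}=(S^\moc)^\moc$, and Theorem~\ref{t3} applied to the discrete monoid $S^\moc$ shows that $(S^\moc)^\moc$, hence $S$, is compact.

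To prove that $S^\moc$ is discrete I would use three preliminary facts: since $S$ is abelian and inverse it is Clifford, so $S=\bigcup_{e\in E}H_e$ and $\pi\colon S\to E$, $\pi(x)=xx^{-1}$, is a continuous retracting homomorphism; since $E$ is compact, the dual $E^\moc$ is discrete by Theorem~\ref{t3}; and every compact topological semilattice has a smallest element (the limit of the downward-directed net of finite meets of all of its elements). Then I would fibre $S^\moc$ over $E^\moc$ by the continuous restriction homomorphism $\rho\colon S^\moc\to E^\moc$, $\rho(h)=h|_E$. Since $E^\moc$ is discrete, each fibre $\rho^{-1}(\chi)$ is clopen in $S^\moc$, so it suffices to show that every fibre is discrete. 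Fixing $\chi\in E^\moc$ and setting $O=\chi^{-1}(1)$ — a clopen subsemilattice of $E$ containing $1$, clopen because $\chi$ is continuous into the discrete two-point semilattice $\{0,1\}$ — the set $S_O:=\pi^{-1}(O)$ is a clopen abelian inverse submonoid of $S$ with idempotent semilattice $O$ (compact, zero-dimensional) and compact maximal groups $H_e$, $e\in O$. A homomorphism $h\in S^\moc$ with $h|_E=\chi$ vanishes on $\pi^{-1}(E\setminus O)$ (for $e\in E\setminus O$ and $x\in H_e$ one has $h(x)=h(xe)=h(x)\chi(e)=0$) and restricts on $S_O$ to a homomorphism into $\IT$; conversely every element of the dual group $S_O^\circ$ extends to such an $h$ by setting $h=0$ on the clopen ideal $\pi^{-1}(E\setminus O)$. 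Using that $S_O$ is clopen in $S$ one checks this correspondence is a homeomorphism $\rho^{-1}(\chi)\cong S_O^\circ$. Thus the problem reduces to the following claim, which is the decisive point: \emph{if $T$ is an abelian topological inverse monoid whose idempotent semilattice $E_T$ is compact and whose maximal groups are compact, then $T^\circ$ is discrete.}

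To prove this claim I would let $0$ be the smallest element of $E_T$ and consider $\rho_0\colon T\to H_0$, $\rho_0(x)=0x$; this is a continuous homomorphism onto the compact group $H_0$ (indeed $0x\in H_0$ since $(0x)(0x)^{-1}=0\cdot xx^{-1}=0$, as $0$ lies below $xx^{-1}$) which restricts to the identity on $H_0$. Every character $\psi\in T^\circ$ maps the idempotent $0$ to $1$, so $\psi(x)=\psi(0)\psi(x)=\psi(0x)$ for all $x\in T$; that is, $\psi=(\psi|_{H_0})\circ\rho_0$ factors through $\rho_0$. Hence $\psi\mapsto\psi|_{H_0}$ is a continuous isomorphism $T^\circ\to H_0^\circ$ with continuous inverse $\phi\mapsto\phi\circ\rho_0$, so $T^\circ\cong H_0^\circ$ as topological groups, and $H_0^\circ$ is discrete by the Pontryagin--van Kampen Theorem~\ref{group} because $H_0$ is a compact group. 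Applying the claim to $T=S_O$ for each $\chi$ shows that every fibre $\rho^{-1}(\chi)\cong S_O^\circ$ is discrete; as the fibres are clopen, $S^\moc$ is discrete, and the ``only if'' direction follows as explained.

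I expect the main obstacle to be the claim itself — and, just as much, the idea that one should study $S^\moc$ and prove it discrete. The key algebraic input is the simple but crucial observation that characters into $\IT$ annihilate idempotents and therefore factor through the retraction onto the maximal group sitting over the smallest idempotent. The remaining ingredients — that a compact topological semilattice has a least element, and that the bijections $\rho^{-1}(\chi)\cong S_O^\circ$ and $T^\circ\cong H_0^\circ$ are homeomorphisms (which rests on $S_O$ being clopen in $S$ and $H_0$ closed in $T$) — are routine verifications that should nonetheless be carried out with some care.
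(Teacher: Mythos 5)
Your proposal is correct and follows the same route as the paper: the ``if'' direction is Theorem~\ref{t4}, and for ``only if'' one shows $S^\moc$ is discrete, so that $S^{\moc\moc}$ is compact by Theorem~\ref{t3} and reflexivity forces $S\cong S^{\moc\moc}$ to be compact. The only difference is that where the paper simply cites its Lemma~\ref{disc} for the discreteness of $S^\moc$, you supply a self-contained proof of that fact --- fibering $S^\moc$ over the discrete $E^\moc$ and identifying each fibre with the dual group of the compact maximal group over the least idempotent of $\chi^{-1}(1)$ --- which is a sound (and slightly more structural) repackaging of exactly the computation in that lemma: a character is determined by its restriction to $E$ together with its restriction to $H_e$ for the minimal idempotent $e$ with $h(e)=1$, and $H_e^\circ$ is discrete by Pontryagin--van Kampen.
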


Finally, we present two examples showing that (in contrast to Theorem~\ref{group}) Theorem~\ref{t4} 
 cannot be generalized to locally compact topological inverse monoids.

\begin{example} Consider the convergent sequence $E_1=\{1\}\cup\{1-\frac1n:n\in\IN\}\subset\IR$ endowed with the semilattice operation $x\cdot y=\max\{x,y\}$, and the 2-element cyclic subgroup $C_2=\{-1,1\}$  of $\IT$. The product $E_1\times C_2$ is an abelian compact inverse monoid and $S=(E_1\times C_2)\setminus\{(1,-1)\}$ is a submonoid of $E_1\times C_2$. It is clear that  $S$ is locally compact and not compact, the maximal semilattice $E=E_1\times\{1\}$ of $S$ is compact and all maximal subgroups $H_e=S\cap(\{x\}\times C_2)$, $e=(x,1)\in E$, are compact. By Theorem~\ref{refcom} the (locally compact) topological monoid $S$ is not reflexive.
\end{example}

\begin{example} Consider the convergent sequence  $E_0=\{0\}\cup\{\frac1n:n\in\IN\}$  endowed with the semilattice operation $x\cdot y=\min\{x,y\}$.
On the product $S=E_0\times C_2$ consider the commutative semigroup operation
$$(t,x)\cdot(s,y)=\begin{cases}
(t,x)&\mbox{if $t<s$,}\\
(s,y)&\mbox{if $t>s$,}\\
(t,xy)&\mbox{if $t=s$}.
\end{cases}
$$
Endow $S$ with the topology that induces the original topology on $E_0\times\{1\}$ and the discrete topology on $E_0\times\{-1\}$. It is clear that the obtained topological space $S$ is locally compact but not compact. Also, the idempotent semilattice $E=E_0\times\{1\}$ is compact and all maximal subgroups $H_e=\{x\}\times C_2$, $e=(x,1)\in E$, are compact.
By Theorem~\ref{refcom}, the (locally compact) topological monoid $S$ is not reflexive. It can be shown that the canonial homomorphism $\delta:S\to S^{\moc\moc}$ is bijective and the second dual monoid $S^{\moc\moc}$ is compact and can be identified with $S$ endowed with the product topology of $E_0\times C_2$.
\end{example}

\section{Some Lemmas}\label{s:aux}

In this section we prove several lemmas that will be used in the proof of Theorem~\ref{t4}.

First we find a topological condition on the topological monoid $S$ guaranteeing that the canonical homomorphism $\delta_S:S\to S^{\moc\moc}$ is continuous.
We recall that a topological space $X$ is a {\em $k$-space} if a subset $F\subset X$ is closed in $X$ if and only for any compact subset $K\subset X$ the intersection $K\cap F$ is closed in $K$. The class of $k$-spaces includes all regular locally compact spaces and all metrizable spaces, see \cite[\S3.3]{En}.

\begin{lemma}\label{cont} If a topological monoid $S$ is a $k$-space, then the canonical homomorphism $\delta_S:S\to S^{\moc\moc}$ is continuous.
\end{lemma}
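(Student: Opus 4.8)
The goal is to show that if $S$ is a $k$-space, then $\delta_S \colon S \to S^{\moc\moc}$ is continuous. Recall that $S^{\moc\moc}$ carries the compact-open topology as a subspace of $C_k(S^\moc, \cT)$, so a subbasic neighborhood of a point $\delta_S(x_0)$ is determined by a compact set $K \subset S^\moc$ and an open set $U \subset \cT$ with $\delta_S(x_0)(K) \subset U$, i.e. $h(x_0) \in U$ for all $h \in K$. Thus continuity of $\delta_S$ amounts to the following: for every compact $K \subset S^\moc$, the evaluation map $e_K \colon S \to C_k(K, \cT)$, $e_K(x) = (h \mapsto h(x))$, is continuous. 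This is the standard "evaluation is continuous" issue from the theory of function spaces.

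\medskip

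\textbf{Reducing to the $k$-space hypothesis.} Since $S$ is a $k$-space, a map $f \colon S \to Y$ into a topological space $Y$ is continuous if and only if its restriction $f|_C$ to every compact subset $C \subset S$ is continuous. Applying this with $f = e_K$, it suffices to prove that for each compact $C \subset S$ and each compact $K \subset S^\moc$, the restricted evaluation $C \to C_k(K,\cT)$ is continuous. Now the point is that the product $C \times K$ is compact, and the bi-evaluation map $\mathrm{ev} \colon C \times K \to \cT$, $(x,h) \mapsto h(x)$, is \emph{separately} continuous: it is continuous in $x$ because each $h$ is a continuous homomorphism, and it is continuous in $h$ by the definition of the compact-open (here in fact product-like) topology on $S^\moc$, which makes evaluation at each fixed $x$ continuous. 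The plan is to upgrade separate continuity of $\mathrm{ev}$ on the compact set $C \times K$ to joint continuity, and then conclude that the exponential transpose $C \to C(K,\cT)$ is continuous into the compact-open topology.

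\medskip

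\textbf{The exponential law and the main obstacle.} The clean route is: once $\mathrm{ev}\colon C\times K\to\cT$ is jointly continuous with $K$ (locally) compact Hausdorff, the exponential law for the compact-open topology gives that the transpose $C\to C_k(K,\cT)$ is continuous; composing over all $K$ and using the $k$-space property of $S$ yields continuity of $\delta_S$. The real work — and the step I expect to be the main obstacle — is establishing the passage from \emph{separate} to \emph{joint} continuity of $\mathrm{ev}$ on $C\times K$. This is not automatic for arbitrary spaces, but here one has extra structure to exploit: $K$ consists of homomorphisms into the equicontinuous-friendly target $\cT$, and one can argue that a compact subset $K\subset S^\moc=C_k(S,\cT)$ restricted to the compact set $C$ is an equicontinuous family (via an Ascoli-type argument — compactness in the compact-open topology over a $k$-space forces equicontinuity on compacta, using that $\cT$ is a compact metric space). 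Equicontinuity of $K|_C$ together with separate continuity immediately yields joint continuity of $\mathrm{ev}$ on $C\times K$.

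\medskip

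\textbf{Assembling the argument.} So the steps, in order, are: (1) unwind the compact-open topology on $S^{\moc\moc}$ to reduce continuity of $\delta_S$ to continuity of each evaluation $S \to C_k(K,\cT)$ for $K\subset S^\moc$ compact; (2) invoke the $k$-space hypothesis on $S$ to reduce further to continuity of $C \to C_k(K,\cT)$ for each compact $C\subset S$; (3) show $K|_C$ is equicontinuous by an Ascoli-type argument, using compactness of $K$ in $C_k(S,\cT)$ and the fact that $\cT$ is compact metric; (4) deduce joint continuity of $\mathrm{ev}\colon C\times K\to\cT$ from separate continuity plus equicontinuity; (5) apply the exponential law to conclude $C\to C_k(K,\cT)$ is continuous, hence $\delta_S$ is continuous. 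The homomorphism property of $\delta_S$ is already given, so only continuity needs proof, and the equicontinuity step (3) is where the argument really lives.
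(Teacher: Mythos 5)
Your proposal is correct and rests on the same key ingredient as the paper's proof: the Ascoli theorem, which converts compactness of $K\subset S^\moc$ in the compact-open topology into equicontinuity of $K$, the $k$-space hypothesis on $S$ being exactly what Ascoli requires. The paper finishes more directly than you do --- it fixes a subbasic open set $[K,U]\subset S^{\moc\moc}$ and uses equicontinuity of $K$ together with compactness of $\{h(x):h\in K\}\subset U$ to produce an $\varepsilon$-neighborhood of any point of $\delta_S^{-1}([K,U])$ --- whereas you route through restriction to compact subsets of $S$, the passage from separate to joint continuity, and the exponential law; both organizations are sound.
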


\begin{proof} By definition, the compact-open topology on the second dual inverse monoid $S^{\moc\moc}$ is generated by the sub-base consisting of the sets
$$[K,U]=\{\mu\in S^{\moc\moc}:\mu(K)\subset U\}$$where $K$ is a compact subset of $S^\moc$ and $U$ is an open subset of $\cT$. So, the continuity of $\delta_S$ will follow as soon as we check that for any compact $K\subset S^\moc$ and open $U\subset\cT$ the set $$\delta_S^{-1}([K,U])=\{x\in S:\forall h\in K\;h(x)\in U\}$$ is open in $S$. Given any point $x\in \delta_S^{-1}([K,U])$, we should find a neighborhood $O(x)\subset\delta_S^{-1}([K,U])$. Since the calculation map $c_x:S^\moc\to \cT$, $c_x:h\mapsto h(x)$, is continuous, the set $K(x)=\{h(x):h\in K\}\subset U$ is compact. Consequently, $\e=\inf\{|z-y|:z\in K(x),\;y\in\cT\setminus U\}$ is positive. 

 By the Ascoli Theorem 8.2.10 \cite{En}, the compact subset $K$ of $S^\moc$ is equicontinuous. Consequently, the point $x$ has a neighborhood $O(x)\subset S$ such that $|h(x)-h(x')|<\e$ for all $x'\in O(x)$ and $h\in K$. Now the choice of $\e$ guarantees that $h(x')\in U$ and thus $O(x)\subset \delta_S^{-1}([K,U])$ witnessing that the latter set is open in $S$.
\end{proof}

For an inverse semigroup $S$ its idempotent semilattice will be denoted by $E(S)$ or just $E$ if $S$ is understood from the context.

\begin{lemma}\label{l1} Let $E$ be the idempotent semilattice of a Clifford topological inverse monoid $S$ and $i:E\to S$ be the identity inclusion. The dual homomorphism $i^: :S^{\mathbf:}\to E^:$, $i^::h\mapsto h\circ i=h|E$, is a topological isomorphism.
\end{lemma}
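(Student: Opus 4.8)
I want to show that the restriction map $i^{\mathbf:}\colon S^{\mathbf:}\to E^{\mathbf:}$, $h\mapsto h|E$, is a topological isomorphism of topological semilattices. I will break this into three parts: (a) $i^{\mathbf:}$ is injective; (b) $i^{\mathbf:}$ is surjective; (c) $i^{\mathbf:}$ is a homeomorphism onto its image (equivalently, an open map onto $E^{\mathbf:}$, given (a) and (b)). That $i^{\mathbf:}$ is a continuous monoid homomorphism is already covered by the functoriality of $(\cdot)^{\moc}$ discussed in the excerpt, so the work is in these three points.

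**Injectivity via the Clifford retraction.** The key structural fact I will exploit is that, since $S$ is Clifford, the map $\pi\colon S\to E$, $\pi(x)=xx^{-1}=x^{-1}x$, is a (continuous) monoid homomorphism and $\pi|E=\id_E$. Now take $h\in S^{\mathbf:}$, so $h(S)\subset\{0,1\}$. For any $x\in S$ we have $h(x)^2=h(x^2)$; but $x^2$ need not equal $x$, so I cannot directly say $h(x)$ is idempotent in a useful way. Instead I use $x x^{-1}=\pi(x)\in E$ and $h(x)h(x^{-1})=h(\pi(x))=h|E(\pi(x))$, together with the fact that $h(x)\in\{0,1\}$ forces $h(x^{-1})=h(x)$ (because in $\{0,1\}$ every element is its own inverse, and $h(x)h(x^{-1})h(x)=h(x)$ holds since $h$ is a homomorphism and $xx^{-1}x=x$). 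Hence $h(x)=h(x)h(x^{-1})=h(\pi(x))=(h|E)(\pi(x))$. This shows that $h$ is completely determined by its restriction $h|E$ via the formula $h=(h|E)\circ\pi$, which gives injectivity of $i^{\mathbf:}$ at once.

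**Surjectivity and openness.** For surjectivity, given $g\in E^{\mathbf:}$ (a continuous semilattice homomorphism $E\to\{0,1\}$), I define $h:=g\circ\pi\colon S\to\{0,1\}$. This is continuous (composition of continuous maps), sends $1$ to $1$, and is multiplicative because $\pi$ is a homomorphism and $g$ is; so $h\in S^{\mathbf:}$, and clearly $h|E=g\circ\pi|E=g$. Thus $i^{\mathbf:}$ is a bijection whose inverse is $g\mapsto g\circ\pi$. For the topological statement I must check this inverse is continuous, i.e. that $i^{\mathbf:}$ is open, or directly that $g\mapsto g\circ\pi$ is continuous from $E^{\mathbf:}$ to $S^{\mathbf:}$ in the compact-open topologies. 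This follows because $\pi\colon S\to E$ is continuous: a sub-basic open set $[K,U]$ in $S^{\mathbf:}$ (with $K\subset S$ compact, $U\subset\{0,1\}$ open) pulls back under $g\mapsto g\circ\pi$ to $[\pi(K),U]$ in $E^{\mathbf:}$, and $\pi(K)$ is compact since $\pi$ is continuous. Hence both $i^{\mathbf:}$ and its inverse are continuous.

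**Expected obstacle.** The only genuinely delicate point is the algebraic identity $h(x)=h(xx^{-1})$ for $h\in S^{\mathbf:}$, i.e. that every $\{0,1\}$-valued homomorphism factors through $\pi$. One must be careful that this uses the Clifford hypothesis (so that $xx^{-1}=x^{-1}x$ and $\pi$ is a homomorphism) and the special arithmetic of $\{0,1\}$ — for a general monoid homomorphism into $\cT$ this factorization fails (that is exactly why the group part $S^\circ$ is nontrivial). Once this identity is in hand, injectivity, surjectivity, and bicontinuity all reduce to the continuity and homomorphism properties of $\pi$, which are given. I therefore expect the proof to be short, with the factorization identity as its single substantive step.
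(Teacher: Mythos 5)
Your proof is correct and takes essentially the same route as the paper's: both hinge on the factorization $h=(h|E)\circ\pi$ through the Clifford retraction $\pi(x)=xx^{-1}$, with the inverse of $i^{\mathbf:}$ being the dual map $\pi^{\mathbf:}:g\mapsto g\circ\pi$. The only difference is cosmetic — the paper verifies the two composite identities $i^{\mathbf:}\circ\pi^{\mathbf:}=\id$ and $\pi^{\mathbf:}\circ i^{\mathbf:}=\id$ and leaves the continuity of $\pi^{\mathbf:}$ to the functoriality of $(\cdot)^{\moc}$, whereas you split the same content into injectivity, surjectivity, and an explicit check on sub-basic open sets.
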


\begin{proof} The inverse to $i^:$ is the dual map 
$$\pi^::E^:\to S^:,\;\pi^::h\mapsto h\circ\pi,$$
to the homomorphism $\pi:S\to E$, $\pi:x\mapsto xx^{-1}=x^{-1}x$.
The equality $i^:\circ\pi^:=\id_{E^:}$ follows from $\pi\circ i=id_E$. To show that $\pi^:\circ i^:=\id_{S^:}$, take any homomorphism $h\in S^:$ and a point $x\in S$. 
Observe that $h(S)\subset\{0,1\}$ implies that 
$$h\circ \pi(x)=h(xx^{-1})=h(x)h(x)^{-1}=h(x)$$ and thus $h=h\circ \pi=\pi^:\circ i^:(h)$.
\end{proof}

Since for an inverse semigroup $S$ the idempotent semilattice of $S^\moc$ coincides with $S^:$, Lemma~\ref{l1} implies: 

\begin{corollary}\label{ES} For any Clifford topological inverse monoid $S$ and the identity inclusion $i:E(S)\to S$ the dual homomorphism $i^\moc:S^\moc\to E(S)^{\moc}$ maps isomorphically the idempotent semilattice $E(S^\moc)$ of $S^\moc$ onto $E(S)^\moc=E(S)^:$.
\end{corollary}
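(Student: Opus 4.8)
\textbf{Proof proposal for Corollary~\ref{ES}.}
The plan is to reduce the statement directly to Lemma~\ref{l1} by unwinding the identifications of the idempotent semilattices involved. First I would recall the two identifications already recorded in the text: since $S$ is an inverse semigroup, the idempotent semilattice $E(S^\moc)$ of the dual inverse monoid coincides with the dual idempotent monoid $S^:$; and since $E(S)$ is an idempotent monoid, its dual inverse monoid $E(S)^\moc$ coincides with $E(S)^:$. Thus the assertion to be proved is precisely that the map $i^\moc$ sends $S^:$ isomorphically onto $E(S)^:$.

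Next I would observe that $i^\moc$ does carry $E(S^\moc)$ into $E(S)^\moc$: being a monoid homomorphism of inverse monoids, $i^\moc$ maps idempotents to idempotents, so $i^\moc(E(S^\moc))\subset E(E(S)^\moc)=E(S)^\moc$. Then I would check that the restriction $i^\moc|E(S^\moc)\colon S^:\to E(S)^:$ is nothing other than the homomorphism $i^:\colon S^:\to E^:$, $i^:\colon h\mapsto h|E$, considered in Lemma~\ref{l1}: for $h\in S^:$ one has $i^\moc(h)=h\circ i=h|E(S)$ by the very definition of the dual homomorphism. The compact-open topologies on $S^:$ and $E(S)^:$ are the subspace topologies inherited from $S^\moc$ and $E(S)^\moc$ respectively, so this is the same map of topological monoids as in Lemma~\ref{l1}.

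Finally, Lemma~\ref{l1} tells us exactly that $i^:$ is a topological isomorphism of $S^:$ onto $E(S)^:$, with inverse the dual $\pi^:$ of the retraction $\pi\colon S\to E(S)$. Combining this with the two identifications above gives that $i^\moc$ restricts to a topological isomorphism of $E(S^\moc)$ onto $E(S)^\moc=E(S)^:$, which is the claim. I do not expect any real obstacle here: the content is entirely in Lemma~\ref{l1}, and the only care needed is in tracking the coincidences $E(S^\moc)=S^:$ and $E(S)^\moc=E(S)^:$ and verifying that the restricted dual map is the one appearing in that lemma.
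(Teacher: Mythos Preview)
Your proposal is correct and follows exactly the paper's approach: the corollary is deduced immediately from Lemma~\ref{l1} together with the identifications $E(S^\moc)=S^:$ and $E(S)^\moc=E(S)^:$, and your write-up simply makes these steps explicit.
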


\begin{lemma}\label{dixmier} For any topological monoid $S$ the composition $\delta_S^\moc\circ \delta_{S^\moc}$ of the homomorphisms
$$\xymatrix{
S^\moc\ar[r]^{\delta_{S^\moc}}&S^{\moc\moc\moc}\ar[r]^{\delta^\moc_S}&S^\moc}
$$
equals the identity homomorphism of $S^\moc$.
\end{lemma}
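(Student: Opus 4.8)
This is the classical "Dixmier" or "triangle identity" for the double-dualization functor, so I would mimic the standard argument. The plan is to unwind the definitions of both maps on an arbitrary element $h\in S^\moc$ and show the result is $h$ itself, by evaluating at an arbitrary point $x\in S$. First I would fix $h\in S^\moc$ and form $\delta_{S^\moc}(h)\in S^{\moc\moc\moc}$: by definition this is the homomorphism that sends $\mu\in S^{\moc\moc}$ to $\mu(h)$. Then applying $\delta_S^\moc$, which by definition of the dual homomorphism is precomposition with $\delta_S:S\to S^{\moc\moc}$, I get the homomorphism $\delta_S^\moc\bigl(\delta_{S^\moc}(h)\bigr)=\delta_{S^\moc}(h)\circ\delta_S\in S^\moc$.

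The key computation is then: for any $x\in S$,
$$\bigl(\delta_{S^\moc}(h)\circ\delta_S\bigr)(x)=\delta_{S^\moc}(h)\bigl(\delta_S(x)\bigr)=\delta_S(x)(h)=h(x),$$
where the last equality is exactly the defining property of the canonical homomorphism $\delta_S$, namely $\delta_S(x):h\mapsto h(x)$. Hence $\delta_S^\moc\circ\delta_{S^\moc}$ sends $h$ to the map $x\mapsto h(x)$, which is $h$, so the composition is the identity on $S^\moc$.

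There is really no obstacle here beyond bookkeeping: the only thing to be careful about is not confusing the three levels of dualization and keeping straight that $\delta_S^\moc$ denotes the functorial image of $\delta_S$ under $(\cdot)^\moc$ (i.e.\ precomposition with $\delta_S$), not $\delta_{S^\moc}$. No continuity or algebraic side conditions enter, since $\delta$ is always a monoid homomorphism and the dual of a continuous homomorphism is continuous; the identity is purely formal and holds for every topological monoid $S$.
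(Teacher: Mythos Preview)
Your proof is correct and follows essentially the same route as the paper: fix $h\in S^\moc$, unwind the definitions of $\delta_{S^\moc}$ and of $\delta_S^\moc$ as precomposition with $\delta_S$, and evaluate at an arbitrary $x\in S$ to obtain $h(x)$. The paper introduces the intermediate notation $H=\delta_{S^\moc}(h)$ but the computation is identical to yours.
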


\begin{proof} Given any homomorphism $h\in S^\moc$, we need to check that $\delta_S^\moc(\delta_{S^\moc}(h))=h$.
Let $H=\delta_{S^\moc}(h)$ and note that for any $\mu\in S^{\moc\moc}$
$H(\mu)=\mu(h)$ by the definition of the canonical homomorphism $\delta_{S^\moc}:S^\moc\to S^{\moc\moc\moc}$. In particular, for any $x\in S$ we get $$H(\delta_S(x))=(\delta_S(x))(h)=h(x)$$ where the latter equality follows from the definition of $\delta_S$. By the definition of the dual homomorphism $\delta_S^\moc:S^{\moc\moc\moc}\to S^\moc$, we get
$$\delta_S^\moc(\delta_{S^\moc}(h))(x)=\delta_S^\moc(H)=H(\delta_{S}(x))=h(x)$$and hence 
$\delta_S^\moc(\delta_{S^\moc}(h))=h$.
\end{proof}

Let $S$ be a Clifford topological inverse semigroup and $E$ be its idempotent semilattice. For each element $e\in E$ consider the upper cone ${\uparrow}e=\{f\in E:f\ge e\}$. Here we endow $E$ with the partial order: $e\le f$ iff $ef=e$.
It is clear that for each $e\in E$ the upper cone ${\uparrow}e$ is closed in $E$. 
An idempotent $e\in E$ will be called {\em locally minimal} if the upper cone ${\uparrow}e$ is open in $E$.
This is equivalent to saying that $e$ is the smallest element of some neighborhood of $e$ in $E$. The following lemma can be easily derived from Theorem II.1.12 of \cite{HMS}.

\begin{lemma}\label{locmin} Let $S$ be a compact zero-dimensional topological semilattice. For any point $e\in E$ and a neighborhood $O(e)\subset E$ of $e$ there is a locally minimal element $e'\in O(e)$ such that $e'\le e$.
\end{lemma}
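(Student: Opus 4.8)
The plan is to reduce the lemma to two ingredients: (i) in a compact zero-dimensional topological semilattice the clopen subsemilattices form a base of the topology, which follows from Theorem~II.1.12 of \cite{HMS}; and (ii) every nonempty compact topological semilattice has a least element. Note that for a topological semilattice $S$ we have $E=S$, so we are given a point $e\in S$ and, shrinking if necessary, an \emph{open} neighborhood $O(e)$ of $e$ in $S$.

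First I would apply (i) to find a clopen subsemilattice $T\subset S$ with $e\in T\subset O(e)$; this is the only place where zero-dimensionality enters. Being closed in the compact semilattice $S$, the set $T$ is a compact topological semilattice in its own right, so by (ii) it has a least element $e'=\min T$ (with respect to the order $x\le y\iff xy=x$). Concretely, $e'$ can be produced as a cluster point of the monotone decreasing net $\big(\prod_{a\in A}a\big)$ indexed by the nonempty finite subsets $A$ of $T$, ordered by inclusion: any such cluster point lies in $T$ because $T$ is closed, and for every $t\in T$ the net is eventually contained in the closed lower cone ${\downarrow}t=\{x\in S:xt=x\}$, whence $e'\le t$.

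It remains to observe that $e'$ has the required properties: $e'\le e$ because $e\in T$, and $e'\in T\subset O(e)$; moreover $T$ is an open neighborhood of $e'$ whose smallest element is $e'$, so by the characterization of local minimality recorded just before the lemma, the upper cone ${\uparrow}e'$ is open and $e'$ is locally minimal. The only step with genuine content is (i); if one wished to avoid quoting \cite{HMS}, the main obstacle would be establishing that a compact zero-dimensional topological semilattice has a base of clopen subsemilattices — equivalently, that its continuous characters into $\{0,1\}$ separate points — which is a Stone-type representation theorem for such semilattices.
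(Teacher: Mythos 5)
Your argument is correct, and it follows the same route the paper intends: the paper gives no proof of this lemma beyond the remark that it ``can be easily derived from Theorem~II.1.12 of \cite{HMS}'', and your proposal is exactly that derivation, spelled out --- take a clopen subsemilattice $T$ with $e\in T\subset O(e)$, let $e'=\min T$ (which exists by compactness via the decreasing net of finite products), and note that $e'$ is the smallest element of the open set $T$, hence locally minimal by the equivalence recorded just before the lemma. The auxiliary steps (the cluster-point construction of $\min T$ and the openness of ${\uparrow}e'$) are standard and check out.
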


Finally we describe a very useful construction of extension of a homomorphism $h:H_e\to \IT$ from a maximal subgroup $H_e$ of a Clifford inverse semigroup $S$ to a homomorphism $\bar h:S\to\cT$.

For each locally minimal idempotent $e\in E$ consider the homomorphism 
 $\Lambda_e:H_e^\circ\to S^\moc$ assigning to each homomorphism $h:H_e\to\IT$ the monoid homomorphism $\Lambda_eh\in S^\moc$ defined by
$$\Lambda_e h:x\mapsto \begin{cases}
h(xe)&\mbox{if $xe\in H_e$}\\
0&\mbox{otherwise}.
\end{cases}
$$
The local minimality of $e$ guarantees that so-defined homomorphism $\Lambda_eh$ is continuous.

\section{Proof of Theorem~\ref{t3}} We divide the proof of Theorem~\ref{t3} into two lemmas.

\begin{lemma}\label{l:dc} If $S$ is a discrete topological monoid, then its dual inverse monoid $S^\moc$ is compact.
\end{lemma}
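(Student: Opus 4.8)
The plan is to realize $S^\moc$ as a closed subspace of a compact product space and then identify which closure conditions need checking. Since $S$ is discrete, every function $S\to\cT$ is automatically continuous, so $S^\moc$ is simply the set of all monoid homomorphisms $h\colon S\to\cT$, viewed as a subset of the product $\cT^S$. Because $\cT$ is compact (it is a closed bounded subset of $\IC$) and $S$ is discrete, the compact-open topology on $S^\moc$ coincides with the topology of pointwise convergence, i.e.\ with the subspace topology inherited from the Tychonoff product $\cT^S$. By Tychonoff's theorem $\cT^S$ is compact, so it suffices to show that $S^\moc$ is \emph{closed} in $\cT^S$.

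The closedness is a routine ``closed conditions'' argument. First I would note that the unit condition $\{\mu\in\cT^S:\mu(1)=1\}$ is closed, since evaluation at $1$ is continuous and $\{1\}$ is closed in $\cT$. Second, for each fixed pair $(x,y)\in S\times S$ the set $\{\mu\in\cT^S:\mu(xy)=\mu(x)\mu(y)\}$ is closed: the maps $\mu\mapsto\mu(xy)$ and $\mu\mapsto\mu(x)\mu(y)$ are both continuous $\cT^S\to\cT$ (the latter using continuity of multiplication in $\cT$), and $\cT$ is Hausdorff, so the equalizer of two continuous maps into a Hausdorff space is closed. Intersecting over all $(x,y)\in S^2$ and with the unit condition, we get that $S^\moc$ is an intersection of closed sets, hence closed in the compact space $\cT^S$, therefore compact.

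The final thing to verify is that this compact topology on $S^\moc$ really is the compact-open topology, not just the pointwise one. Here the observation is that the compact subsets of the discrete space $S$ are exactly the finite subsets; the compact-open topology generated by sets $[K,U]$ with $K\subset S$ compact and $U\subset\cT$ open is therefore generated by $[F,U]$ with $F$ finite, which is exactly the product (pointwise-convergence) topology on $\cT^S$ restricted to $S^\moc$. So the two topologies agree and $S^\moc$ is compact as claimed.

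I do not anticipate a serious obstacle here; the only points requiring the hypothesis ``$S$ discrete'' are (a) every set-map $S\to\cT$ is continuous, so $S^\moc$ is the full space of algebraic monoid homomorphisms, and (b) compact subsets of $S$ are finite, so the compact-open topology reduces to the product topology. Both are immediate. The remaining content is the elementary fact that the equalizer of continuous maps into a Hausdorff space is closed, applied coordinate-pair by coordinate-pair.
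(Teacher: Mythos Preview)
Your proof is correct and follows essentially the same approach as the paper: identify $S^\moc$ with a closed subset of the Tychonoff product $\cT^S$ (using that compact-open $=$ pointwise topology when $S$ is discrete) and invoke Tychonoff's theorem. The paper's version is terser, simply asserting closedness of the set $\{h\in\cT^S:h(1)=1,\ \forall x,y\ h(xy)=h(x)h(y)\}$ without spelling out the equalizer argument, but the content is identical.
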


\begin{proof} Since $S$ is discrete, the compact-open topology on the dual inverse monoid $S^\moc$ coincides with the topology inherited from the Tychonov product $\cT^S$ under the identity inclusion $S^\moc\to \cT^S$. Being a closed subset of the compact Hausdorff space $\cT^S$, the dual inverse monoid
$$S^\moc=\{h\in \cT^S:\mbox{$h(1)=1$ and $\forall x,y\in S$ \ \ $h(xy)=h(x)h(y)$}\}\subset\cT^S$$ of $S$ is compact.
\end{proof}

The ``compact'' part of Theorem~\ref{t3} is a bit more difficult.

\begin{lemma}\label{l:cd} If $S$ is a compact topological monoid, then its dual inverse monoid $S^\moc$ is discrete.
\end{lemma}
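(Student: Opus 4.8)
The plan is to show that the constant homomorphism $\mathbf 1\colon S\to\cT$, $x\mapsto 1$, is isolated in $S^\moc$; since $S^\moc$ is a topological monoid, homogeneity-type arguments (translation by a fixed homomorphism) then do not quite give isolation of every point for free — so instead I would argue directly that $\{\mathbf 1\}$ is open and then transport openness to an arbitrary $h\in S^\moc$ using the group structure available on each $\cT$-component. Concretely, the compact-open topology on $S^\moc$ makes the basic neighborhoods of $\mathbf 1$ the sets $[S,U]=\{h\in S^\moc: h(S)\subset U\}$ with $U$ a neighborhood of $1$ in $\cT$ (here we may take $K=S$ since $S$ is compact). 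So the key claim is: \emph{there is a neighborhood $U$ of $1$ in $\cT$ such that the only continuous monoid homomorphism $h\colon S\to\cT$ with $h(S)\subset U$ is the constant map $\mathbf 1$.}

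To prove this claim, take $U=\{z\in\cT: |z-1|<1\}$. This $U$ is disjoint from $0$, so any $h$ with $h(S)\subset U$ actually lands in $\IT$, i.e. $h\in S^\circ$ is a continuous \emph{group-valued} character on the compact monoid $S$. A standard argument (the ``no small subgroups'' property of $\IT$) now applies: since $h(S)$ is a subgroup of $\IT$ contained in the arc $U$, which contains no nontrivial subgroup of $\IT$, we get $h(S)=\{1\}$, hence $h=\mathbf 1$. One subtlety: $h(S)$ is a subgroup of $\IT$ only because $S$ maps \emph{onto} its image as a monoid and the image, being a homomorphic image of a monoid landing in $\IT\subset\cT$, is a submonoid of $\IT$; but a submonoid of $\IT$ contained in a half-arc around $1$ must be $\{1\}$ — indeed if $z\neq 1$ lies in such a submonoid, its positive powers $z^n$ stay in $U$, yet $\{z^n:n\ge 1\}$ accumulates around the whole circle when $z$ is not a root of unity, and cycles outside $U$ when $z$ is a nontrivial root of unity. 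Either way we reach a contradiction, so $h=\mathbf 1$ and $[S,U]=\{\mathbf 1\}$ is open.

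It remains to show every $h_0\in S^\moc$ is isolated. Fix $h_0$ and let $e=h_0\circ\pi_{S^\moc}$ be... more simply: consider the idempotent $\varphi=h_0h_0^{-1}\in E(S^\moc)=S^{:}$, which is a continuous homomorphism $S\to\{0,1\}$, and note $h_0$ lies in the maximal subgroup $H_\varphi$ of $S^\moc$. Translation by $h_0^{-1}$ inside $H_\varphi$ is a homeomorphism of $H_\varphi$ carrying $h_0$ to $\varphi$, so it suffices to show $\varphi$ is isolated in $S^\moc$. Now $Z=\varphi^{-1}(1)$ is a closed, hence compact, submonoid of $S$, and any $h\in S^\moc$ sufficiently close to $\varphi$ (in the neighborhood $[Z,U]\cap[S\setminus Z\text{-related basic set}]$, using that $\varphi$ separates $Z$ from the rest via the open set $\{1\}\subset\{0,1\}$ and compactness of $S$) must satisfy $h|Z\in Z^\moc$ close to $\mathbf 1$, hence $h|Z=\mathbf 1$ by the claim applied to the compact monoid $Z$, which combined with the remaining neighborhood conditions forces $h=\varphi$. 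The main obstacle is this last step: pinning down exactly which basic compact-open neighborhood of $\varphi$ collapses to $\{\varphi\}$, i.e. controlling the behaviour of nearby homomorphisms on $S\setminus Z$ (where $\varphi$ is $0$) as well as on $Z$; this requires using compactness of $S$ together with continuity of $\varphi$ to get an open cover argument, and is where I expect the real work to lie. I would isolate it as: for the compact monoid $S$ there is a single neighborhood $V$ of $\varphi$ in $S^\moc$ with $V=\{\varphi\}$, proved by combining the ``no small subsemigroups'' claim above on the compact submonoid $\varphi^{-1}(1)$ with a separation/compactness argument on $\varphi^{-1}(0)$.
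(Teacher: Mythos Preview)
Your approach is genuinely different from the paper's and, once two loose ends are tied, it works. The paper does not argue directly on $S$: it first maps $S$ into $\cT^{S^\moc}$ via $x\mapsto(f(x))_{f\in S^\moc}$, notes that the image $\tilde S$ is a compact \emph{abelian inverse} monoid with $\tilde S^\moc\cong S^\moc$, and then applies a structural lemma (Lemma~\ref{disc}) that exploits the Clifford decomposition of $\tilde S$: for $h\in\tilde S^\moc$ one locates the least idempotent $e$ with $h(e)=1$, uses compactness of the maximal group $H_e$ (hence discreteness of $H_e^\circ$ by Pontryagin) to pin down $h|H_e$, and combines this with the basic open set $[E_0,\{0\}]$. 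Your argument bypasses both the passage to $\tilde S$ and the Clifford structure, relying only on ``no small submonoids'' in $\IT$ and the topology of $\cT$; that is more elementary and stays entirely inside $S^\moc$.

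Two points need tightening. First, your reduction from $h_0$ to $\varphi=h_0h_0^{-1}$: the translation homeomorphism of $H_\varphi$ only transfers isolation \emph{within} $H_\varphi$, so as written the implication ``$\varphi$ isolated in $S^\moc$ $\Rightarrow$ $h_0$ isolated in $S^\moc$'' is missing a step. The missing observation is that once $\{\varphi\}$ is open in $S^\moc$, so is $H_\varphi$, being the preimage of $\{\varphi\}$ under the continuous map $g\mapsto gg^{-1}$ (recall $S^\moc$ is abelian, so this really is $H_\varphi$); then isolation in the open subset $H_\varphi$ yields isolation in $S^\moc$. Second, the ``real work'' you anticipate on $\varphi^{-1}(0)$ is in fact trivial: since $\varphi:S\to\{0,1\}$ is continuous, both $Z=\varphi^{-1}(1)$ and $S\setminus Z=\varphi^{-1}(0)$ are clopen, hence compact, and $\{0\}$ is an \emph{isolated} point of $\cT$. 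Thus $[S\setminus Z,\{0\}]$ is already a sub-basic open set, and $[Z,U]\cap[S\setminus Z,\{0\}]=\{\varphi\}$ directly by your no-small-submonoids argument applied to the compact monoid $Z$; no open-cover argument is needed.
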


\begin{proof} Consider the homomorphism 
$$\rho:S\to \cT^{S^\moc},\;\;\rho:x\mapsto \big(f(x)\big)_{f\in S^\moc}.$$ By the continuity of $\rho$, the image $\tilde S=\rho(S)$ of $S$ is a compact submonoid of the topological inverse monoid $\cT^{S^\moc}$. Since for each element $x\in\tilde S$ the inverse $x^{-1}$ of $x$ in the inverse semigroup $\cT^{S^{\moc}}$ lies in the closure of the set $\{x^n:n\in\IN\}\subset\tilde S$, $\tilde S$ is an inverse subsemigroup of $\cT^{S^\moc}$.

It follows from the definition of $\tilde S$ that for each continuous monoid homomorphism $h:S\to\cT$ there is a unique continuous monoid homomorphism $\tilde h:\tilde S\to\cT$ such that $h=\tilde h\circ\rho$. This implies that the dual inverse monoids $S^\moc$ and $\tilde S^\moc$ are topologically isomorphic. 
The following lemma implies that for the compact abelian topological inverse monoid $\tilde S$ the dual inverse monoid $\tilde S^\moc=S^\moc$ is discrete.
\end{proof}

\begin{lemma}\label{disc} Let $S$ be an abelian topological inverse monoid with compact idempotent semilattice $E$. If for each locally minimal idempotent $e\in E$ the maximal subgroup $H_e$ is compact, then the dual inverse monoid $S^\moc$ is discrete.
\end{lemma}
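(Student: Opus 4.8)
The plan is to show that the constant homomorphism $\mathbf 1 \in S^\moc$ (sending every $x\in S$ to $1$) is isolated in $S^\moc$; since $S^\moc$ is a topological group\,--\,like object under pointwise multiplication in the sense that left translation by any $f\in S^\moc$ is a continuous self-map, but it is \emph{not} a group, one must be slightly careful. More precisely, I would first reduce to $S$ Clifford: any continuous homomorphism $h\colon S\to\cT$ factors through the quotient by the congruence identifying $x$ with $x^{-1}x\cdot x$\dots\ actually the clean statement is that $h(x)=h(x)\,h(x^{-1}x)$ forces $h$ to vanish off the union of maximal subgroups unless $S$ is already Clifford near the relevant idempotents, so without loss of generality (replacing $S$ by the image $\tilde S$ as in Lemma~\ref{l:cd}, which is Clifford) we assume $S=\bigcup_{e\in E}H_e$ is Clifford. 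Then by Corollary~\ref{ES} the idempotent semilattice $E(S^\moc)$ of $S^\moc$ is topologically isomorphic to $E^\moc=E^:$, which is discrete by Theorem~\ref{semilat}(2) applied to the compact semilattice $E$. So the idempotents of $S^\moc$ already form a discrete set.

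Next I would pin down a neighborhood basis at $\mathbf 1$. Since $E^:$ is discrete, the idempotent $\pi\in S^:$ corresponding to the constant function $1$ on $E$ is isolated in $E^:$; pulling back through $i^\moc$ (Corollary~\ref{ES}) gives a compact set $K_0\subset E$ and $\e_0>0$ with the property that any $g\in S^:$ with $|g(e)-1|<\e_0$ for all $e\in K_0$ equals $\mathbf 1$. The key step is then to enlarge this to all of $S^\moc$, not just its idempotent part, using compactness of the relevant maximal subgroups. Here is where the construction of $\Lambda_e\colon H_e^\circ\to S^\moc$ from the end of Section~\ref{s:aux} enters: for each locally minimal $e\in E$ the subgroup $H_e$ is compact, so its dual group $H_e^\circ=H_e^\moc$ is discrete by Theorem~\ref{group}(2), hence the trivial character of $H_e$ is isolated in $H_e^\circ$. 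Using Lemma~\ref{locmin}, the locally minimal idempotents are ``cofinal downward'' and there are only finitely many of them above any point of the discrete semilattice $E^:$\,--\,wait, more carefully: I would argue that only finitely many locally minimal idempotents are ``relevant'' to a small neighborhood of $\mathbf 1$, because the open cones ${\uparrow}e$ over locally minimal $e$ form an open cover of the compact $E$, extract a finite subcover $\{{\uparrow}e_1,\dots,{\uparrow}e_n\}$, and fix finite sets $F_i\subset H_{e_i}$ together with $\e_i>0$ witnessing isolation of the trivial character in each discrete $H_{e_i}^\circ$.

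Finally I would assemble the neighborhood: let $F=\{e_1,\dots,e_n\}\cup\bigcup_{i=1}^n F_i$, a compact (indeed finite) subset of $S$, and let $\e=\min\{\e_0,\e_1,\dots,\e_n\}$. I claim the basic neighborhood $W=\{h\in S^\moc : |h(x)-1|<\e\ \forall x\in F\}$ of $\mathbf 1$ contains only $\mathbf 1$. Given $h\in W$: restricting $h$ to each $E$ shows, via the choice of $\e_0$ and the finite subcover, that $h(e)=1$ for $e$ in a ``cofinal'' part of $E$, hence $h|E=\mathbf 1$ since $h$ is a continuous semilattice homomorphism on the compact zero-dimensional $E$ and agrees with $\mathbf 1$ on a dense-enough set (this uses Lemma~\ref{locmin} again to reach arbitrary idempotents from locally minimal ones below them); in particular $h$ maps $S$ into $\cT$ with $h(e)=h(x x^{-1})=|h(x)|^2$\dots\ so $h(xx^{-1})=1$ forces $h(x)\in\IT$, i.e.\ $h\in S^\circ$. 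Then $h$ restricted to each $H_{e_i}$ is a character agreeing with the trivial one on $F_i$, hence trivial on $H_{e_i}$; since $S=\bigcup H_e$ and every $x\in S$ lies in some $H_e$ with $e\in{\uparrow}e_i$ for some $i$, and $h(xe_i)=h(x)h(e_i)=h(x)$ with $xe_i\in H_{e_i}$, we conclude $h(x)=1$ for all $x$, i.e.\ $h=\mathbf 1$. Thus $\mathbf 1$ is isolated; applying the same argument after translating (for $f\in S^\moc$, the map $h\mapsto fh$ on $S^\moc$ and a symmetric argument, or directly: the neighborhood $fW$ of $f$ meets $S^\moc$ in a single point when $W$ is chosen adapted to the support of $f$) shows every point of $S^\moc$ is isolated, so $S^\moc$ is discrete.

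The main obstacle I anticipate is the ``assembling'' step, specifically controlling the interaction between the semilattice part and the group parts: showing that a single \emph{finite} set $F\subset S$ and a single $\e$ suffice simultaneously for all maximal subgroups. This works only because the locally minimal idempotents whose cones cover $E$ are finite in number (compactness) and because forcing $h|E=\mathbf 1$ already confines $h$ to $S^\circ$ and collapses the problem to finitely many ordinary compact abelian groups, each handled by Pontryagin duality. Verifying that ``every $x\in S$ lies in some $H_e$ with $e$ above one of the chosen $e_i$'' is exactly what the finite subcover of $\{{\uparrow}e_i\}$ guarantees, and that $h(xe_i)$ lands in $H_{e_i}$ is where local minimality of $e_i$ (and continuity of $\Lambda_{e_i}$) is used implicitly.
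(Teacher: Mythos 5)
Your proposal isolates only the identity $\mathbf 1$ of $S^\moc$ and then appeals to ``translation'' to isolate an arbitrary $h$; this is the genuine gap. Unlike a topological group, a topological inverse monoid need not be homogeneous: the map $f\mapsto hf$ is not a homeomorphism of $S^\moc$ (it is not even injective, since multiplying by $h$ destroys all information about $f$ at points where $h$ vanishes), and $hW$ need not be a neighbourhood of $h$. Indeed, in the compact unital semilattice $\{0\}\cup\{1/n:n\in\IN\}$ with the operation $\min$, the unit is isolated while $0$ is not, so ``the identity is isolated'' does not imply discreteness in this category. The parenthetical ``or directly: choose $W$ adapted to the support of $f$'' is precisely the part that has to be proved, and it is where all the work lies: for a general $h$ one must simultaneously force nearby characters to vanish on $E_0=E\cap h^{-1}(0)$ and to agree with $h$ (not merely be trivial) on an appropriate compact maximal subgroup.

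The missing device is the one the paper uses: for an arbitrary $h\in S^\moc$, the clopen compact subsemilattice $E_1=E\cap h^{-1}(1)$ has a smallest element $e$, and since ${\uparrow}e=E_1$ is open, $e$ is \emph{automatically} locally minimal, so $H_e$ is compact by hypothesis and $H_e^\circ$ is discrete by Theorem~\ref{group}. The single open set $\{f\in S^\moc: f|E_0\subset\{0\},\ f|H_e=h|H_e\}$ then collapses to $\{h\}$ via $f(x)=f(x)f(e)=f(xe)=h(xe)=h(x)$ when $xx^{-1}\in E_1$ and $f(x)=f(xx^{-1})f(x)=0$ when $xx^{-1}\in E_0$. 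Your machinery for the identity (discreteness of $E^:$, a finite cover of $E$ by cones over locally minimal idempotents, finitely many compact $H_{e_i}$) is essentially sound but heavier than necessary, and it does not extend to general $h$ without the observation above. Two smaller defects: $S$ is Clifford automatically because it is an abelian inverse semigroup, so the ``reduction to $\tilde S$ as in Lemma~\ref{l:cd}'' is both unneeded and unjustified here ($S$ is not assumed compact); and Lemma~\ref{locmin} requires $E$ to be zero-dimensional, which is not a hypothesis of the present lemma, so you cannot invoke it to produce your covering family.
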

 
\begin{proof} To see that $S^\moc$ is discrete, fix any homomorphism $h\in S^\moc$ and consider the closed-and-open subsemilattices $E_i=E\cap h^{-1}(i)$, $i\in\{0,1\}$, of the compact semilattice $E$. Being compact, the subsemilattice $E_1$ has the smallest element $e$, which is locally minimal in $E$ as $E_1={\uparrow}e$ is open in $E$. By our assumption, 
the maximal subgroup $H_e=\{x\in S:xx^{-1}=e\}$ is compact. By the Pontryagin-van Kampen Duality Theorem~\ref{group}, the dual group $H_{e}^\circ$ is discrete. Consequently, the singleton $\{h|H_{e}\}$ is open in $H_{e}^\circ$. It follows that $O(h)=\{f\in S^\moc:f|E_0\subset\{0\},\;f|H_e=h|H_e\}$ is an open neighborhood of $h$ in the compact-open topology of $S$. It remains to check that $U=\{h\}$.
Indeed, take any homomorphism $f\in U$ and a point $x\in S$. If $xx^{-1}\in E_0$, then $0=f(xx^{-1})=f(x)\cdot(f(x))^{-1}$ and hence $f(x)=0=h(x)$. If $xx^{-1}\in E_1$, then $xe\in H_e$ and thus
$$f(x)=f(x)\cdot 1=f(x)h(e)=f(x)f(e)=f(xe)=h(xe)=h(x)\cdot1=h(x),$$ witnessing that $f=h$.
\end{proof}

\section{Proof of Theorem~\ref{t4}}

We divide the proof of Theorem~\ref{t4} into two lemmas.

\begin{lemma}\label{l:dr} Each abelian discrete topological inverse monoid $S$ is reflexive.
\end{lemma}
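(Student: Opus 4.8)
The plan is to show that for a discrete abelian inverse monoid $S$, the canonical homomorphism $\delta_S \colon S \to S^{\moc\moc}$ is a topological isomorphism. Since $S$ is discrete, it is a $k$-space, so by Lemma~\ref{cont} the map $\delta_S$ is continuous; moreover, by Theorem~\ref{t3} the dual $S^\moc$ is compact, and dualizing again $S^{\moc\moc}$ is discrete, so continuity is automatic and what really remains is to prove that $\delta_S$ is a \emph{bijection} onto $S^{\moc\moc}$ — injectivity (separation of points of $S$ by homomorphisms into $\cT$) and surjectivity (every $\mu \in S^{\moc\moc}$ arises as $\delta_S(x)$ for some $x \in S$). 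Since an abelian inverse monoid is Clifford, I can use the decomposition $S = \bigcup_{e \in E} H_e$ and the homomorphism $\pi \colon S \to E$, together with Corollary~\ref{ES}, which identifies the idempotent semilattice of $S^\moc$ with $E^:$, and with the Pontryagin--van Kampen and Hofmann--Mislove--Stralka theorems applied to the groups $H_e$ and the semilattice $E$.

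First I would handle injectivity. Given $x \neq y$ in $S$, I need a continuous (= arbitrary, since $S$ is discrete) monoid homomorphism $h \colon S \to \cT$ with $h(x) \neq h(y)$. If $\pi(x) \neq \pi(y)$ in $E$, then because the discrete semilattice $E$ (being reflexive by Theorem~\ref{semilat}) is separated by homomorphisms $E \to \{0,1\}$, composing with $\pi$ gives the desired $h$. If $\pi(x) = \pi(y) = e$, then $x,y$ lie in the same maximal group $H_e$; since $e$ is automatically locally minimal in the discrete $E$, the extension construction $\Lambda_e \colon H_e^\circ \to S^\moc$ described before Section~3 lets me promote any character of $H_e$ separating $x$ from $y$ (such a character exists because $H_e$ is a discrete abelian group, hence reflexive, hence has enough characters into $\IT$) to a homomorphism on all of $S$. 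This shows $\delta_S$ is injective.

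The surjectivity is the main obstacle, and here is how I would organize it. Take $\mu \in S^{\moc\moc}$. Restricting $\mu$ to the idempotent semilattice $E(S^\moc) \cong E^:$ (Corollary~\ref{ES}) yields an element of $E^{::}$; by reflexivity of the discrete semilattice $E$ there is an idempotent $e \in E$ with $\mu|_{E(S^\moc)} = \delta_E(e)$ under this identification. The plan is then to show that $\mu$ is supported, in an appropriate sense, on the clopen ``slice'' of $S^\moc$ coming from characters that do not vanish on $H_e$, namely the image of $\Lambda_e \colon H_e^\circ \to S^\moc$. Precomposing $\mu$ with $\Lambda_e$ gives an element of $H_e^{\circ\circ}$; by reflexivity of the discrete abelian group $H_e$ (Theorem~\ref{group}) there is $x \in H_e$ with $\mu \circ \Lambda_e = \delta_{H_e}(x)$. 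I then claim $\delta_S(x) = \mu$, i.e. $\mu(f) = f(x)$ for every $f \in S^\moc$. To verify this I would split an arbitrary $f \in S^\moc$ using its behavior on $E$: writing $e_f$ for the smallest idempotent on which $f$ is nonzero and using the factorization of $f$ through $\Lambda_{e_f}$ together with the multiplicativity of $\mu$ and the compatibility of $\mu|_{E(S^\moc)}$ with $e$, one reduces $\mu(f)$ to either $0$ (when $f(x) = 0$, i.e. when $e \not\le e_f$, forced by $\mu(ff^{-1}) = f(x)f(x)^{-1}$) or to the already-established identity $\mu \circ \Lambda_e = \delta_{H_e}(x)$ (when $e \ge e_f$, so $f$ restricted to the relevant group factors through $H_e$).

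I expect the bookkeeping in this last reduction — carefully matching the idempotent $e$ extracted from $\mu|_{E(S^\moc)}$ with the supports of arbitrary characters $f$, and checking the $\Lambda_e$-construction interacts correctly with the product in $S^\moc$ — to be the genuinely delicate part; the inputs (reflexivity of discrete abelian groups and of discrete semilattices, plus Corollary~\ref{ES} and Lemma~\ref{dixmier}) are all in hand, but assembling them so that every $f \in S^\moc$ is covered requires the Clifford structure and the local minimality of every idempotent in a discrete semilattice to be used at exactly the right moments.
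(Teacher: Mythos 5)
Your overall architecture matches the paper's: reduce to bijectivity via Theorem~\ref{t3}, prove injectivity by the two-case split on $\pi(x)$ versus $\pi(y)$, and for surjectivity extract an idempotent $e$ from $\mu|_{E(S^\moc)}\in E^{::}$ and a candidate point from $\mu\circ\Lambda_e\in H_e^{\circ\circ}$. The injectivity half is complete and correct. In the surjectivity half there are two gaps, one small and one serious. The small one: before invoking reflexivity of $H_e$ you must check that $\mu\circ\Lambda_e$ actually belongs to $H_e^{\circ\circ}$, i.e.\ that its image lies in $\IT$ rather than being $\{0\}$ (note $\Lambda_e\mathbf 1$ is \emph{not} the unit of $S^\moc$, so this is not automatic); the paper verifies $\mu(\Lambda_e\mathbf 1)=1$ using $\Lambda_e\mathbf 1=(\Lambda_e\mathbf 1)\circ\pi$ and the identity $\mu(h\circ\pi)=h(e)$.

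The serious gap is in the final verification that $\mu(f)=f(x)$ for every $f\in S^\moc$, which you acknowledge as the delicate part but whose sketched route would fail. Your plan hinges on ``the smallest idempotent $e_f$ on which $f$ is nonzero'' and a factorization of $f$ through $\Lambda_{e_f}$. In a discrete (hence generally non-compact) semilattice the filter $\{e'\in E:f(e')=1\}$ need not have a smallest element --- take $E=\IZ\cup\{+\infty\}$ under $\min$ with unit $+\infty$ and $f\equiv 1$ --- so $e_f$ may not exist and $f$ need not factor through any $\Lambda_{e'}$; it is only in the compact setting of Lemma~\ref{disc} that such a minimum is guaranteed. The paper's proof avoids $e_f$ entirely: it splits on whether $f(x)=0$ or $f(x)\ne 0$ (equivalently $f(e)=0$ or $1$), handles the first case via $f=f\cdot(f\circ\pi)$ and $\mu(f\circ\pi)=f(e)=0$, and in the second case introduces $g=\Lambda_e(f|H_e)$ built from the idempotent $e$ determined by $\mu$ (not by $f$), establishes the pointwise identity $f\cdot(g\circ\pi)=g\cdot(f\circ\pi)$ in $S^\moc$, and applies the multiplicative $\mu$ to both sides to obtain $\mu(f)=\mu(f\circ\pi)\cdot\mu(g)=1\cdot f(x)$. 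That identity is the missing idea; without it (or a substitute) your reduction does not close. There is also a sign slip in your sketch: for $x\in H_e$ one has $f(x)=0$ iff $e\not\ge e_f$, not $e\not\le e_f$.
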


\begin{proof} We need to show that the canonical homomorphism $\delta_S:S\to S^{\moc\moc}$ is a topological isomorphism. By Theorem~\ref{t3}, the dual inverse monoid $S^{\moc}$ is compact and the second dual $S^{\moc\moc}$ is discrete. So, it suffices to check that $\delta_S$ is bijective.   

Consider the homomorphism $\pi:S\to E$, $\pi:x\mapsto xx^{-1}=x^{-1}x$ that retracts $S$ onto its idempotent semilattice $E$. 

To show that the canonical homomorphism $\delta_S$ is injective, take any two distinct points $x,y\in S$ and consider the idempotents $\pi(x)$ and $\pi(y)$. If these idempotents are distinct, then by the Hofmann-Mislove-Stralka Duality Theorem \ref{semilat}, there is a monoid homomorphism $h:E\to\{0,1\}$ such that $h(\pi(x))\ne h(\pi(y))$. Then the homomorphism $h\circ\pi$ belongs to $S^\moc$ and
$$\delta_S(x)(h\circ\pi)=h\circ\pi(x)\ne h\circ\pi(y)=\delta_S(y)(h\circ\pi),$$witnessing that $\delta_S(x)\ne\delta_S(y)$.

Next, assume that $\pi(x)=\pi(y)$. In this case the points $x,y$ belong to the maximal subgroup $H_e$ of the idempotent $e=xx^{-1}=yy^{-1}$. By the Pontryagin-van Kampen Duality Theorem~\ref{group}, there is a group homomorphism $h:H_e\to\IT$ such that $h(x)\ne h(y)$. Since $E$ is discrete, the element $e$ is locally minimal and hence it is legal to consider the extension $\Lambda_eh\in S^\moc$ of $h$ discussed in Section~\ref{s:aux}. For the homomorphism $\Lambda_eh$ we get:
$$\delta_S(x)(\Lambda_eh)=\Lambda_eh(x)=h(x)\ne h(y)=\Lambda_eh(y)=\delta_S(y)(\Lambda_eh),$$
 witnessing that $\delta_S(x)\ne\delta_S(y)$. After considering these two cases, we conclude that the canonical homomorphism $\delta$ is injective.
\smallskip

Next, we show that $\delta:S\to S^{\moc\moc}$ is surjective. Fix any element $\mu\in S^{\moc\moc}$.  The homomorphism $\pi:S\to E$, $\pi:x\mapsto xx^{-1}$, induces the dual homomorphism $\pi^\moc:E^\moc\to S^\moc$ 
 and the second dual homomorphism
$\pi^{\moc\moc}:S^{\moc\moc}\to E^{\moc\moc}$. By Hofmann-Mislove-Stralka Theorem~\ref{semilat}, the discrete idempotent monoid $E$ is reflexive and thus is topologically isomorphic to its second dual idempotent monoid $E^{::}=E^{\moc\moc}$. Consequently, the element $\pi^{\moc\moc}(\mu)$ coincides with the image $\delta_E(e)$ of some idempotent $e\in E$ under the canonical homomorphism $\delta_E:E\to E^{\moc\moc}=E^{::}$. It follows that for any homomorphism $h\in E^:$ we get
$\mu(h\circ\pi)=h(e)$.

Now consider the maximal subgroup $H_{e}\subset S$. Since $e$ is locally minimal, it is legal to consider the continuous homomorphism $\Lambda_e:H_e^\circ\to S^\moc$ discussed in Section~\ref{s:aux}. 
Then the composition $\mu\circ\Lambda_e:H_e^\circ\to \cT$ is a continuous semigroup homomorphism. 
We claim that $\mu\circ\Lambda(H_e^\circ)\subset\IT$. Since $\mu\circ\Lambda(H_e^\circ)$ is a subgroup of $\cT$, it suffices to check that $\mu\circ\Lambda_e(\bold 1)=1$ for the trivial homomorphism $\bold 1:H_e\to\{1\}\subset\cT$.
Since $\Lambda_e\bold 1=(\Lambda_e\bold 1)\circ\pi$, we get
$$\mu(\Lambda_e\bold 1)=\mu(\Lambda_e\bold 1\circ\pi)=(\Lambda_e\bold 1)(e)={\mathbf 1}(e)=1.$$Therefore, $\mu\circ\Lambda_e:H_e^\circ\to\IT$ belongs to the second dual group $H_e^{\circ\circ}$.

 By the Pontryagin-van Kampen Duality Theorem~\ref{group}, the discrete abelian group $H_{e}$ is reflexive and thus
the homomorphism $\mu\circ\Lambda_e$ is equal to $\delta_{H_e}(z)$ for some point $z\in H_e$.
We claim that $\mu=\delta_S(z)$. For this we need to check that $\mu(h)=h(z)$ for each homomorphism $h\in S^\moc$. 
Two cases are possible. 

0) $h(z)=0$. In this case $h(e)=h\circ\pi(z)=h(z)\cdot h(z)^{-1}=0$ and thus $\mu(h\circ\pi)=h(e)=0$. Since $h=h\cdot (h\circ\pi)$, we conclude that
$$\mu(h)=\mu(h\cdot (h\circ\pi))=\mu(h)\cdot \mu(h\circ\pi)=0=h(z).$$

1) $h(z)\ne 0$. In this case $h(e)=h\circ\pi(z)=h(z)\cdot h(z)^{-1}=1$ and thus $\mu(h\circ\pi)=h(e)=1$. Also $h|H_e:H_e\to \IT$ is a well-defined homomorphism on the group $H_e$. Let $g=\Lambda_e(h|H_e)\in S^\moc$ and observe that for each $x\in S$ with $\pi(x)\ge e$ we get $g(x)=h(ex)=h(e)h(x)=1\cdot h(x)=h(x)$. Consequently, $h\cdot (g\circ\pi)=g\cdot (h\circ\pi)$. By the choice of $z$, we get $\mu(g)=h(z)\ne 0$ and $\mu(g\circ\pi)=\mu(g\cdot g^{-1})=h(z)\cdot h(z)^{-1}=1$. Then 
$$\mu(h)=\mu(h)\cdot 1=\mu(h)\cdot \mu(g\circ\pi)=\mu(h\cdot (g\circ\pi))=\mu((h\circ \pi)\cdot g)=\mu(h\circ\pi)\cdot \mu(g)=1\cdot h(z).$$
This completes the proof of the surjectivity of the canonical homomorphism $\delta:S\to S^{\moc\moc}$.
\end{proof}

\begin{lemma} Each compact topological inverse monoid $S$ with zero-dimensional idempotent semilattice $E$ is reflexive.
\end{lemma}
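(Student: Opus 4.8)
The plan is to run a dual version of the argument used in Lemma~\ref{l:dr}, using the fact that $S^\moc$ is now discrete (Theorem~\ref{t3}), so $S^{\moc\moc}$ is compact and it suffices to show that $\delta_S$ is a continuous bijection onto $S^{\moc\moc}$ with continuous inverse; since $S$ is compact and $S^{\moc\moc}$ is Hausdorff, a continuous bijection will automatically be a homeomorphism. Continuity of $\delta_S$ comes for free from Lemma~\ref{cont}, because a compact Hausdorff space is a $k$-space. So the real content is bijectivity of $\delta_S$.

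First I would reduce to the Clifford case. A compact topological inverse monoid is automatically Clifford: for any $x$, the inverse $x^{-1}$ lies in the closed subsemigroup generated by $x$ (the closure of $\{x^n:n\in\IN\}$), which is a compact abelian monoid, forcing $xx^{-1}=x^{-1}x$; hence $S=\bigcup_{e\in E}H_e$ and $\pi:S\to E$, $\pi:x\mapsto xx^{-1}$, is a continuous homomorphism. Each maximal subgroup $H_e$ need not be closed in $S$, but its closure is still a compact group, and one checks $H_e$ is closed because $H_e=\pi^{-1}(e)$. Thus every $H_e$ is a compact abelian group, so by Theorem~\ref{group} each $H_e$ is reflexive, and $E$ is a compact zero-dimensional semilattice, so by Theorem~\ref{semilat} $E$ is reflexive.

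For injectivity of $\delta_S$: given $x\ne y$ in $S$, if $\pi(x)\ne\pi(y)$ separate them by $h\circ\pi$ for a suitable $h\in E^:$ provided by reflexivity of $E$; if $\pi(x)=\pi(y)=e$, then $x,y\in H_e$ and by reflexivity of $H_e$ there is a continuous $h:H_e\to\IT$ with $h(x)\ne h(y)$. The subtlety, which did not arise in Lemma~\ref{l:dr}, is that $e$ need not be locally minimal, so the extension $\Lambda_e h$ of Section~\ref{s:aux} need not be continuous. Here I would invoke Lemma~\ref{locmin}: choose a locally minimal $e'\le e$ in a small neighborhood of $e$, but this changes the subgroup. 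Instead the cleaner route is to first compose with the projection $S\to S/{\!\sim_\e}$ onto a suitable compact \emph{metrizable} quotient inverse monoid on which $e$ becomes locally minimal — or, better, to bypass extension entirely and use that $h\cdot(f\circ\pi)$ for $f\in E^:$ with $f(e)=1$ already lives in $S^\moc$ and, by continuity of $h$ on the compact group $H_e$, can be approximated/extended; I would use the Hahn–Banach-type extension available for characters of compact groups together with zero-dimensionality of $E$ to produce a genuine element of $S^\moc$ agreeing with $h$ on $H_e$ and vanishing off ${\uparrow}e'$ for a locally minimal $e'$. This extension step — producing, for a character $h$ of $H_e$, a continuous monoid homomorphism $\bar h:S\to\cT$ with $\bar h|H_e=h$ — is the main obstacle, and it is where zero-dimensionality of $E$ is essential (it lets us cut $E$ along a clopen ${\uparrow}e'$ with $e'$ locally minimal and $e'\le e$, then apply $\Lambda_{e'}$ after transporting $h$ along the group homomorphism $H_{e'}\to H_e$, $z\mapsto ze$).

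For surjectivity of $\delta_S$: fix $\mu\in S^{\moc\moc}$. Push forward via $\pi^{\moc\moc}:S^{\moc\moc}\to E^{\moc\moc}$; by reflexivity of $E$, $\pi^{\moc\moc}(\mu)=\delta_E(e)$ for a unique $e\in E$, so $\mu(f\circ\pi)=f(e)$ for all $f\in E^:$. Now restrict $\mu$ along the extension family for the subgroup $H_e$ (using the clopen locally-minimal trick above to make $\Lambda$ legal) to get an element of $H_e^{\circ\circ}$; by reflexivity of $H_e$ it equals $\delta_{H_e}(z)$ for some $z\in H_e$. Finally one shows $\mu=\delta_S(z)$ by the same two-case computation as in Lemma~\ref{l:dr} — splitting on $h(z)=0$ versus $h(z)\ne0$ for $h\in S^\moc$, using $h=h\cdot(h\circ\pi)$ and the identities $\mu(f\circ\pi)=f(e)$ and $\mu\circ\Lambda_e=\delta_{H_e}(z)$ — these calculations carry over verbatim once the extension/compactness issues are settled. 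Having established that $\delta_S$ is a continuous bijection from the compact space $S$ onto the Hausdorff space $S^{\moc\moc}$, it is a topological isomorphism, so $S$ is reflexive.
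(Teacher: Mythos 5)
Your overall frame (continuity from Lemma~\ref{cont}, compactness of $S$ reducing everything to bijectivity) matches the paper, but both halves of your bijectivity argument have real problems. For injectivity with $\pi(x)=\pi(y)=\tilde e$, you correctly spot that $\tilde e$ need not be locally minimal, but your proposed repairs do not work as described: the map $H_{e'}\to H_e$, $z\mapsto ze$, is not a map into $H_e$ at all (for $z\in H_{e'}$ with $e'\le\tilde e$ one has $ze=z$, so it is the identity of $H_{e'}$), and ``Hahn--Banach-type extension'' of a character of $H_{\tilde e}$ to a character of the larger group $H_{e'}$ would require extending along the non-injective homomorphism $H_{\tilde e}\to H_{e'}$, $x\mapsto xe'$, which is exactly the wrong variance. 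The paper's trick is the opposite of extending the character: it moves the \emph{points} down rather than the character up. Using disjoint neighborhoods $O(x),O(y)$ and continuity of multiplication, one finds $O(\tilde e)$ with $xO(\tilde e)\subset O(x)$, $yO(\tilde e)\subset O(y)$, picks a locally minimal $e\le\tilde e$ in $O(\tilde e)$ by Lemma~\ref{locmin}, concludes $ex\ne ey$ in $H_e$, separates $ex,ey$ by a character of $H_e$, and applies $\Lambda_e$ -- which is legal because $e$ \emph{was chosen} locally minimal.

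The surjectivity argument is the more serious gap. Your claim that the two-case computation of Lemma~\ref{l:dr} ``carries over verbatim once the extension/compactness issues are settled'' begs the question: the idempotent $e$ determined by $\pi^{\moc\moc}(\mu)=\delta_E(e)$ need not be locally minimal, so $\Lambda_e$ is simply unavailable, and replacing $e$ by a locally minimal $e'\le e$ is not an option here -- it would produce a candidate $z\in H_{e'}$ with $\pi(z)=e'\ne e$, contradicting $\mu(f\circ\pi)=f(e)$ for $f\in E^:$ separating $e'$ from $e$. The paper does not attempt the direct construction at all. Instead it argues by contradiction: $\delta(S)$ is closed in the compact $S^{\moc\moc}$, so a point $x\notin\delta(S)$ admits (after passing to a locally minimal $e\le xx^{-1}$ with $ex\notin\delta(S)$) a character $\tilde h=\Lambda_eh$ of $S^{\moc\moc}$ with $\tilde h|\delta(S)=\tilde h\circ\pi|\delta(S)$ but $\tilde h\ne\tilde h\circ\pi$; this makes the dual map $\delta_S^\moc:S^{\moc\moc\moc}\to S^\moc$ non-injective. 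But by Lemma~\ref{dixmier} together with the reflexivity of the \emph{discrete} monoid $S^\moc$ (Theorem~\ref{t3} and Lemma~\ref{l:dr}), $\delta_S^\moc$ is the inverse of a topological isomorphism, hence injective -- a contradiction. This Dixmier-style detour through the third dual is the essential idea your proposal is missing; without it (or some genuine substitute) the surjectivity half does not go through.
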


\begin{proof}
 We need to show that the canonical homomorphism $\delta_S:S\to S^{\moc\moc}$ is a topological isomorphism. By Lemma~\ref{cont}, $\delta_S$ is continuous. Since $S$ is compact, it suffices to check that $\delta_S$ is bijective.   
 
Take any two distinct points $x,y\in S$. If the idempotents $xx^{-1},yy^{-1}$ are distinct, then repeating the argument from the Lemma~\ref{l:dr}, we can construct a continuous monoid homomorphism $h:E\to\{0,1\}$ such that $\delta_S(x)(h\circ\pi)\ne \delta_S(y)(h\circ \pi)$. Next, assume that $xx^{-1}=yy^{-1}$. 
Fix two disjoint neighborhoods $O(x)$ and $O(y)$ of the points $x,y$ in $S$ and by the continuity of the semigroup operation, find a neighborhood $O(\tilde e)\subset E$ of the idempotent $\tilde e=xx^{-1}=yy^{-1}$ such that $xO(\tilde e)\subset O(x)$ and $yO(\tilde e)\subset O(y)$. By Lemma~\ref{locmin}, the neighborhood $O(\tilde e)$ contains a locally minimal idempotent $e\le\tilde e$. The choice of the neighborhood $O(\tilde e)$ guarantees that $ex\ne ey$. Then by the Pontryagin-van Kampen Duality Theorem~\ref{group}, we can find a continuous group homomorphism $h:H_e\to\IT$ such that $h(ex)\ne h(ey)$. Extend $h$ to the continuous monoid homomorphism $\Lambda_eh\in S^\moc$.
Then $$\delta_S(x)(\Lambda_e h)=\Lambda_e h(x)=h(ex)\ne h(ey)=\Lambda_e h(y)=\delta_S(y)(\Lambda_eh)$$ and thus $\delta_S(x)\ne\delta_S(y)$, witnessing that the homomorphism $\delta:S\to S^{\moc\moc}$ is injective.

It remains to prove the surjectivity of the canonical homomorphism $\delta_S:S\to S^{\moc\moc}$. Assuming that $\delta$ is not surjective, find a point $x\in S^{\moc\moc}\setminus\delta(S)$ and consider the idempotent $\tilde e=xx^{-1}$ of $S^{\moc\moc}$. By Lemma~\ref{ES}, the idempotent semilattice $E(S^{\moc\moc})$ of the second dual inverse monoid $S^{\moc\moc}$ can be identified with the second dual semilattice $E^{::}$ of $E=E(S)$. By Hofmann-Mislove-Stralka Duality Theorem~\ref{semilat}, the topological semilattice $E$, being compact, is reflexive and thus can be identified with its second dual idempotent monoid $E^{::}$. Consequently, the idempotent semilattice of $S^{\moc\moc}$ coincides with $\delta_S(E)$.

Since $x\tilde e=x\notin\delta(S)$ and the set $\delta(S)$ is closed in $S^{\moc\moc}$, the idempotent $\tilde e$ has a neighborhood $O(\tilde e)\subset E(S^{\moc\moc})$ such that $ex\notin\delta(S)$ for any idempotent $e\in O(\tilde e)$.  By Lemma~\ref{locmin}, $O(\tilde e)$ contains a locally minimal idempotent $e\le \tilde e$. Let $\tilde H_e$ be the maximal subgroup of $S^{\moc\moc}$ that contains the idempotent $e$. It follows that $ex\in\tilde H_e\setminus \delta(S)$. Observe that $\tilde H_e\cap\delta(S)$ is a closed subgroup of $\tilde H_e$. Using the Pontryagin-van Kampen Duality Theorem~\ref{group}, we can find a continuous 
homomorphism $h:\tilde H_e\to\IT$ such that $h(\tilde H_e\cap\delta(S))\subset\{1\})$ while $h(ex)\ne 1$. Extend the homomorphism $h$ to the continuous homomorphism $\tilde h=\Lambda_eh:S^{\moc\moc}\to\cT$. The extended homomorphism has the property $\tilde h(\delta(S))\subset\{0,1\}$ and thus $\tilde h|\delta(S)=\tilde h\circ\pi|\delta(S)$ while $\tilde h\ne\tilde h\circ \pi$.

Now consider the dual homomorphism $\delta_S^\moc:S^{\moc\moc\moc}\to S^\moc$. 
It follows that 
$$\delta_S^\moc(\tilde h)=\tilde h\circ\delta_S=\tilde h\circ\pi\circ\delta_S=\delta_S^\moc(\tilde h\circ\pi)$$and hence $\delta^\moc_S$ is not injective. 

On the other hand, by Theorem~\ref{t3}, the dual inverse monoid $S^\moc$ of $S$ is discrete and hence reflexive according to Lemma~\ref{l:dr}. Then the canonical homomorphism $\delta_{S^\moc}:S^\moc\to S^{\moc\moc\moc}$ is a topological isomorphism. By Lemma~\ref{dixmier},
the composition $\delta_S^\moc\circ\delta_{S^\moc}$ coincides with the identity homomorphism of $S^\moc$. Since $\delta_{S^\moc}$ is a topological isomorphism, so is the dual homomorphism $\delta_{S}^\moc:S^{\moc\moc\moc}\to S^\moc$. 
In particular, $\delta^\moc_S$ is injective, which is a desired contradiction.
\end{proof}

\section{Proof of Theorem~\ref{refcom}}

 Let $S$ be an abelian topological monoid with compact zero-dimensional idempotent semilattice $E$ and compact maximal groups $H_e$, $e\in E$. By Lemma~\ref{disc}, the dual inverse monoid $S^\moc$ is discrete and by Theorem~\ref{t3}, the second dual monoid $S^{\moc\moc}$ is compact. If $S$ is reflexive, then $S$ is compact, being topologically isomorphic to $S^{\moc\moc}$. If $S$ is compact, then it is reflexive by Theorem~\ref{t4}.

\end{document}